\begin{document}

\theoremstyle{plain}
\newtheorem{thm}{Theorem}[section]
\newtheorem{cor}[thm]{Corollary}
\newtheorem{con}[thm]{Conjecture}
\newtheorem{cla}[thm]{Claim}
\newtheorem{lm}[thm]{Lemma}
\newtheorem{prop}[thm]{Proposition}
\newtheorem{example}[thm]{Example}

\theoremstyle{definition}
\newtheorem{dfn}[thm]{Definition}
\newtheorem{alg}[thm]{Algorithm}
\newtheorem{prob}[thm]{Problem}
\newtheorem{rem}[thm]{Remark}

\renewcommand{\baselinestretch}{1.1}

\title{\bf An Analogue of the Hilton-Milner Theorem  for weak compositions}
\author{
Cheng Yeaw Ku
\thanks{ Department of Mathematics, National University of
Singapore, Singapore 117543. E-mail: matkcy@nus.edu.sg} \and Kok
Bin Wong \thanks{
Institute of Mathematical Sciences, University of Malaya, 50603
Kuala Lumpur, Malaysia. E-mail:
kbwong@um.edu.my.} } \maketitle

\begin{abstract}\noindent
Let $\mathbb N_0$ be the set of non-negative integers, and let $P(n,l)$ denote the set of all weak compositions of $n$ with $l$ parts, i.e., $P(n,l)=\{ (x_1,x_2,\dots, x_l)\in\mathbb N_0^l\ :\ x_1+x_2+\cdots+x_l=n\}$. For any element $\mathbf u=(u_1,u_2,\dots, u_l)\in P(n,l)$, denote its $i$th-coordinate by $\mathbf u(i)$, i.e., $\mathbf u(i)=u_i$.  A family $\mathcal A\subseteq P(n,l)$ is said to be $t$-intersecting if $\vert \{ i \ :\ \mathbf u(i)=\mathbf v(i)\} \vert\geq t$ for all $\mathbf u,\mathbf v\in \mathcal A$. A family $\mathcal A\subseteq P(n,l)$ is said to be trivially $t$-intersecting  if there is a  $t$-set $T$ of $\{1,2,\dots,l\}$ and elements $y_s\in \mathbb N_0$ ($s\in T$) such that $\mathcal{A}= \{\mathbf u\in P(n,l)\ :\ \mathbf u(j)=y_j\ \textnormal{for all}\ j\in T\}$.  We prove that given any positive integers $l,t$ with $l\geq 2t+3$, there exists a constant $n_0(l,t)$ depending only on $l$ and $t$, such that for all $n\geq n_0(l,t)$, if $\mathcal{A} \subseteq P(n,l)$ is non-trivially $t$-intersecting then
\begin{equation}
\vert \mathcal{A} \vert\leq {n+l-t-1 \choose l-t-1}-{n-1 \choose l-t-1}+t.\notag
\end{equation}
Moreover, equality holds if and only if there is a $t$-set $T$ of $\{1,2,\dots,l\}$ such that
\begin{equation}
\mathcal A=\bigcup_{s\in \{1,2,\dots, l\}\setminus T} \mathcal A_s\cup \left\{ \mathbf  q_i\ :\ i\in T \right\},\notag
\end{equation}
where
\begin{align}
\mathcal{A}_s & =\{\mathbf u\in P(n,l)\ :\ \mathbf u(j)=0\ \textnormal{for all}\ j\in T\ \textnormal{and}\ \mathbf u(s)=0\}\notag
\end{align}
and $\mathbf q_i\in P(n,l)$ with $\mathbf q_i(j)=0$ for all $j\in \{1,2,\dots, l\}\setminus \{i\}$  and $\mathbf q_i(i)=n$.
\end{abstract}

\bigskip\noindent
{\sc keywords:}  cross-intersecting family, Hilton-Milner, Erd{\H
o}s-Ko-Rado, weak compositions

\section{Introduction}

Let $[n]=\{1, \ldots, n\}$, and let ${[n] \choose k}$ denote the
family of all $k$-subsets of $[n]$. A family $\mathcal{A}$ of subsets of $[n]$ is $t$-{\em intersecting} if $|A \cap B| \ge t$ for all $A, B \in \mathcal{A}$. One of the most beautiful results in extremal combinatorics is
the Erd{\H o}s-Ko-Rado theorem.

\begin{thm}[Erd{\H o}s, Ko, and Rado \cite{EKR}, Frankl \cite{Frankl}, Wilson \cite{Wilson}]\label{EKR} Suppose $\mathcal{A} \subseteq {[n] \choose k}$ is $t$-intersecting and $n>2k-t$. Then for $n\geq (k-t+1)(t+1)$, we have
\begin{equation}
\vert \mathcal{A} \vert\leq {n-t \choose k-t}.\notag
\end{equation}
Moreover, if $n>(k-t+1)(t+1)$ then equality holds if and only if $\mathcal{A}=\{A\in {[n] \choose k}\ :\ T\subseteq A\}$ for some $t$-set $T$.
\end{thm}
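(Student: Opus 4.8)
The plan is to establish the bound by the shifting (compression) technique and then recover the equality case by reversing the shifts. For $1\le i<j\le n$ define the shift $S_{ij}$ on a family $\mathcal{A}\subseteq\binom{[n]}{k}$ by replacing each $A$ with $j\in A$, $i\notin A$, and $(A\setminus\{j\})\cup\{i\}\notin\mathcal{A}$ by $(A\setminus\{j\})\cup\{i\}$, leaving all other members fixed. First I would check the two standard invariants: $\vert S_{ij}(\mathcal{A})\vert=\vert\mathcal{A}\vert$, and $S_{ij}(\mathcal{A})$ is again $t$-intersecting. Iterating the shifts until none changes the family produces a \emph{compressed} (left-shifted) family $\mathcal{B}\subseteq\binom{[n]}{k}$ with $\vert\mathcal{B}\vert=\vert\mathcal{A}\vert$ that is still $t$-intersecting, so it suffices to bound $\vert\mathcal{B}\vert$ for compressed $\mathcal{B}$.

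The engine is the following property of compressed families: if $\mathcal{B}$ is compressed and $t$-intersecting with $t\ge 1$, then any two members $F,G$ with $1\notin F$ and $1\notin G$ satisfy $\vert F\cap G\vert\ge t+1$. Indeed, if $\vert F\cap G\vert=t$, choose $j\in F\cap G$; the set $F'=(F\setminus\{j\})\cup\{1\}$ lies in $\mathcal{B}$ by compressedness, and since $1\notin G$ while $j\in G$ we get $\vert F'\cap G\vert=t-1$, contradicting $t$-intersection. I would then split $\mathcal{B}$ at the element $1$ into $\mathcal{B}_{\bar{1}}=\{F\in\mathcal{B}:1\notin F\}$, a $(t+1)$-intersecting family of $k$-sets on $\{2,\dots,n\}$, and $\mathcal{B}_{1}=\{F\setminus\{1\}:1\in F\in\mathcal{B}\}$, a $(t-1)$-intersecting family of $(k-1)$-sets on $\{2,\dots,n\}$, noting the crucial \emph{cross-constraint} that every $F\in\mathcal{B}_{\bar{1}}$ and $G\in\mathcal{B}_{1}$ are cross-$t$-intersecting (because $\vert F\cap(G\cup\{1\})\vert\ge t$ and $1\notin F$). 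Writing $\vert\mathcal{B}\vert=\vert\mathcal{B}_{\bar{1}}\vert+\vert\mathcal{B}_{1}\vert$ and inducting on $k$ with $n$ decreased by one, one cannot merely add the two inductive bounds — that is too weak — so the cross-constraint must be used to show the two parts cannot both be large. I expect this to be the technical heart: packaging the link as a cross-$t$-intersecting analogue of the bound and verifying that it forces $\vert\mathcal{B}\vert\le\binom{n-t}{k-t}$ is exactly where the hypothesis $n\ge(k-t+1)(t+1)$ enters, since a naive propagation of the threshold through $\mathcal{B}_{\bar{1}}$ fails once $k>2t$.

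For equality I would first show that a compressed extremal family $\mathcal{B}$ with $\vert\mathcal{B}\vert=\binom{n-t}{k-t}$ must be the star $\{A:\{1,\dots,t\}\subseteq A\}$: tracing the induction, any deviation from the star either strictly shrinks $\vert\mathcal{B}_{\bar 1}\vert$ or violates the cross-constraint once $n>(k-t+1)(t+1)$ strictly. The genuinely delicate step — the main obstacle — is the \emph{reversal}: an extremal family need not be compressed, and a shift can in principle carry a non-star to a star without changing its size. I would therefore prove that if $S_{ij}(\mathcal{A})$ is a star, $\vert S_{ij}(\mathcal{A})\vert=\vert\mathcal{A}\vert$, and $\mathcal{A}$ is extremal, then $\mathcal{A}$ is itself a star $\{A:T\subseteq A\}$ for some $t$-set $T$, by analyzing exactly which sets each $S_{ij}$ moves at equality. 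Finally I would record that the strict inequality $n>(k-t+1)(t+1)$ is essential: at $n=(k-t+1)(t+1)$ the family $\{A\in\binom{[n]}{k}:\vert A\cap[t+2]\vert\ge t+1\}$ is $t$-intersecting and of the same cardinality, so the star is no longer the unique optimum — which is precisely why the uniqueness clause is asserted only in the strict range.
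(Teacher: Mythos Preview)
The paper does not prove this theorem: Theorem~\ref{EKR} is quoted as background and attributed to \cite{EKR,Frankl,Wilson}, with no argument supplied. There is therefore no in-paper proof to compare your proposal against; the authors only use the statement to motivate the analogue they actually prove (Theorem~\ref{thm_main}).

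As for your sketch on its own merits: the shifting set-up, the invariants $\vert S_{ij}(\mathcal A)\vert=\vert\mathcal A\vert$ and preservation of $t$-intersection, and the observation that in a compressed family two members avoiding $1$ must meet in at least $t{+}1$ points are all correct and standard. Where the outline stops being a proof is exactly where you flag it: after splitting at the element $1$ you get a $(t{+}1)$-intersecting family $\mathcal B_{\bar 1}\subseteq\binom{[2,n]}{k}$ and a $(t{-}1)$-intersecting family $\mathcal B_1\subseteq\binom{[2,n]}{k-1}$, but the inductive thresholds do not propagate --- for instance $n\ge(k-t+1)(t+1)$ does not imply $n-1\ge(k-t)(t+2)$ --- and simply invoking a ``cross-$t$-intersecting analogue'' is a restatement of the difficulty, not a resolution of it. Frankl's argument for the sharp range instead proves that a compressed $t$-intersecting family satisfies $\vert F\cap G\cap[2k-t]\vert\ge t$ for all $F,G$ and then compares with the Ahlswede--Khachatrian-type candidates, while Wilson's proof is spectral; either of those is what actually delivers the bound at $n=(k-t+1)(t+1)$. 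Your equality discussion (reversing shifts, and the non-uniqueness at $n=(k-t+1)(t+1)$ via $\{A:\vert A\cap[t+2]\vert\ge t+1\}$) is accurate in spirit, but it rests on first having the sharp inequality, which your outline has not yet secured.
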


In the celebrated paper \cite{AK}, Ahlswede and Khachatrian extended the Erd{\H o}s-Ko-Rado theorem by determining the structure of all  $t$-intersecting set systems of maximum size for all possible $n$ (see also \cite{Bey, FT, Kee, Ku_Wong3, MT, Pyber, Toku, WZ0} for some related results). There have been many recent results showing that a version of the Erd{\H o}s-Ko-Rado theorem holds for combinatorial objects other than set systems. For example, an analogue of the Erd{\H o}s-Ko-Rado theorem for the Hamming scheme is proved in \cite{Moon}. A complete solution for the $t$-intersection problem in the Hamming space is given in \cite{AK2}. Intersecting families of permutations were initiated by Deza and Frankl in \cite{DF}. Some recent work done on this problem and its variants can be found in \cite{B2, BH,CK, E, EFP, GM, KL, KW, LM, LW, WZ}. The investigation of the Erd{\H os}-Ko-Rado  property for graphs started in \cite{HT}, and gave rise to \cite{B, B3, HS, HST, HK, W}. The Erd{\H o}s-Ko-Rado type results also appear in vector spaces \cite{CP, FW},  set partitions \cite{KR, Ku_Wong, Ku_Wong2} and weak compositions \cite{Ku_Wong4, Ku_Wong5}.

For a family $\mathcal{A}$ of $k$-subsets, $\mathcal{A}$ is said
to be {\em trivially} $t$-intersecting if there exists a $t$-set $T=\{x_1,\dots, x_t\}$ such that all members of $\mathcal{A}$ contains $T$. The
Erd{\H o}s-Ko-Rado theorem implies that a $t$-intersecting family
of maximum size must be trivially $t$-intersecting when $n$ is sufficiently large in terms of $k$ and $t$.

Hilton and Milner \cite{HM} proved a strengthening of the Erd{\H
o}s-Ko-Rado theorem for $t=1$ by determining the maximum size of a
non-trivial 1-intersecting family. A short and elegant proof was
later given by Frankl and F{\" u}redi \cite{FF} using the
shifting technique.

\begin{thm}[Hilton-Milner]\label{Hilton-Milner}
Let $\mathcal{A} \subseteq {[n] \choose k}$ be a non-trivial
1-intersecting family with $k \ge 4$ and $n>2k$. Then
\[ |\mathcal{A}| \le {n-1 \choose k-1} - {n-k-1 \choose k-1}+1.
\]
Equality holds if and only if
\[ \mathcal{A} = \left\{X \in {[n] \choose k}: x \in X, X \cap Y \not
= \emptyset\right\} \cup \{Y\}\]
for some $k$-subset $Y \in {[n] \choose k}$ and $x \in X
\setminus Y$.
\end{thm}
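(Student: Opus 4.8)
The plan is to follow the shifting (compression) approach of Frankl and F{\"u}redi referenced above. For $1\le i<j\le n$ recall the compression $S_{ij}$, which replaces each $A\in\mathcal A$ with $j\in A,\ i\notin A$ by $(A\setminus\{j\})\cup\{i\}$ whenever the latter is not already present, and fixes every other set; it is standard that $\vert S_{ij}(\mathcal A)\vert=\vert\mathcal A\vert$ and that $S_{ij}$ preserves the $1$-intersecting property. Iterating, I would like to pass to a \emph{fully compressed} family, which is a down-set in the domination order (if $A\in\mathcal A$ and $A'$ arises from $A$ by decreasing elements, then $A'\in\mathcal A$). The first and principal obstacle is that full compression can destroy non-triviality, collapsing $\mathcal A$ into a sub-star, and a trivially intersecting family need not satisfy the Hilton--Milner bound. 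I would resolve this either by showing that a maximum-size non-trivial family can be compressed while staying non-trivial, or by treating separately the case in which full compression yields a trivial family, invoking the non-triviality of the original $\mathcal A$ in that branch.

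Granting a compressed non-trivial intersecting family $\mathcal A$, the key structural fact is that, being a down-set in the domination order and non-trivial, $\mathcal A$ contains a $k$-set avoiding the element $1$; every such set dominates $Y_0:=\{2,3,\dots,k+1\}$, so $Y_0\in\mathcal A$. Since $\mathcal A$ is intersecting, every member meets $Y_0$. I then split according to membership of $1$: the members containing $1$ all meet $\{2,\dots,k+1\}$, so their number is at most ${n-1 \choose k-1}-{n-k-1 \choose k-1}$, the count of $k$-sets through $1$ meeting $Y_0$. This isolates the main term of the bound.

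The heart of the argument is to control the non-trivial part $\mathcal B=\{B\in\mathcal A:\ 1\notin B\}$. As $\mathcal A$ is intersecting, a set $A\ni 1$ lies in $\mathcal A$ only if it meets every $B\in\mathcal B$, so
\begin{equation}
\vert\mathcal A\vert\le \#\{A:\ 1\in A,\ A\cap B\neq\emptyset\ \text{for all}\ B\in\mathcal B\}+\vert\mathcal B\vert.\notag
\end{equation}
I would prove a trade-off lemma: for $n>2k$, enlarging $\mathcal B$ past the single set $Y_0$ never increases the right-hand side, because each extra member of $\mathcal B$ forbids at least one further $k$-set through $1$. Quantitatively this is an inclusion--exclusion estimate, using $n>2k$ to ensure ${n-k-1 \choose k-1}\ge 1$ and, more generally, that the forbidden $1$-sets accumulate faster than $\vert\mathcal B\vert$ grows. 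Hence the right-hand side is maximised exactly at $\mathcal B=\{Y_0\}$, giving $\vert\mathcal A\vert\le {n-1 \choose k-1}-{n-k-1 \choose k-1}+1$.

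For equality I would trace the inequalities back: the $1$-set count is tight only if $\mathcal A$ contains \emph{every} $k$-set through $1$ meeting $Y_0$, and the trade-off lemma is tight only if $\mathcal B=\{Y_0\}$; undoing the compressions then recovers precisely the stated family $\{X:\ x\in X,\ X\cap Y\neq\emptyset\}\cup\{Y\}$ with $x\notin Y$. The hypothesis $k\ge 4$ enters here: when $k=3$ the ``triangle'' family $\{X:\ \vert X\cap\{1,2,3\}\vert\ge 2\}$ attains the same size, so excluding non-standard optima (and checking that compression does not conflate two distinct extremal families) needs $k\ge 4$. I expect the two hard points to be (i) compressing without losing non-triviality and (ii) the trade-off lemma on $\mathcal B$, since both must be made to hold uniformly down to the tight range $n=2k+1$.
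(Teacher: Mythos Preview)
The paper does not prove this theorem: it is quoted as classical background, with citations to Hilton--Milner for the original proof and to Frankl--F\"uredi for the compression argument, and the paper's own work concerns only the weak-composition analogue (Theorem~1.3). There is therefore no proof in the paper to compare your proposal against; you are, in effect, sketching the very Frankl--F\"uredi argument that the paper cites.

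As such a sketch, your outline is broadly sound. You have correctly isolated the two genuine subtleties: that repeated shifting can destroy non-triviality (so one must either halt the compression just before this occurs, or handle separately the branch in which the compressed family becomes a star), and that the hypothesis $k\ge 4$ is needed in the uniqueness clause because of the triangle-type family $\{X:\vert X\cap\{1,2,3\}\vert\ge 2\}$ when $k=3$. The part of your plan that is vaguest, and furthest from the published proof, is the ``trade-off lemma'' governing $\mathcal B=\{B\in\mathcal A:1\notin B\}$: as written this is a heuristic rather than an argument, and in the standard compression proofs this step is handled not by an inclusion--exclusion balance but by a direct structural observation about shifted intersecting families (every member avoiding $1$ is confined to a short initial segment of $[n]$, so $\vert\mathcal B\vert$ is bounded outright). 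Your route could presumably be completed, but it would be longer than necessary.
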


In this paper, we prove an analogue of Theorem \ref{Hilton-Milner}  for weak compositions with fixed number of parts. Let $\mathbb N_0$ be the set of non-negative integers, and let $P(n,l)$ denote the set of all weak compositions of $n$ with $l$ parts, i.e., $P(n,l)=\{ (x_1,x_2,\dots, x_l)\in\mathbb N_0^l\ :\ x_1+x_2+\cdots+x_l=n\}$. For any element $\mathbf u=(u_1,u_2,\dots, u_l)\in P(n,l)$, denote its $i$th-coordinate by $\mathbf u(i)$, i.e., $\mathbf u(i)=u_i$.  A family $\mathcal A\subseteq P(n,l)$ is said to be $t$-\emph{intersecting} if $\vert \{ i \ :\ \mathbf u(i)=\mathbf v(i)\} \vert\geq t$ for all $\mathbf u,\mathbf v\in \mathcal A$.

A family $\mathcal A\subseteq P(n,l)$ is said to be \emph{trivially} $t$-intersecting  if there is a  $t$-set $T$ of $\{1,2,\dots,l\}$ and elements $y_s\in \mathbb N_0$ ($s\in T$) such that $\mathcal{A}= \{\mathbf u\in P(n,l)\ :\ \mathbf u(j)=y_j\ \textnormal{for all}\ j\in T\}$. If $y_j=0$ for all $j\in T$, then $\mathcal A$ is said to be \emph{strong-trivially} $t$-intersecting.
It has been shown that a $t$-intersecting family
of maximum size must be strong-trivially $t$-intersecting when $n$ is sufficiently large in terms of $l$ and $t$ \cite[Theorem 1.2]{Ku_Wong4}.
Our main result is the following.

\begin{thm}\label{thm_main} Given any positive integers $l,t$ with $l\geq 2t+3$, there exists a constant $n_0(l,t)$ depending only on $l$ and $t$, such that for all $n\geq n_0(l,t)$, if $\mathcal{A} \subseteq P(n,l)$ is non-trivially $t$-intersecting then
\begin{equation}
\vert \mathcal{A} \vert\leq {n+l-t-1 \choose l-t-1}-{n-1 \choose l-t-1}+t.\notag
\end{equation}
Moreover, equality holds if and only if there is a $t$-set $T$ of $[l]$ such that
\begin{equation}
\mathcal A=\bigcup_{s\in [l]\setminus T} \mathcal A_s\cup \left\{ \mathbf  q_i\ :\ i\in T \right\},\notag
\end{equation}
where
\begin{align}
\mathcal{A}_s & =\{\mathbf u\in P(n,l)\ :\ \mathbf u(j)=0\ \textnormal{for all}\ j\in T\ \textnormal{and}\ \mathbf u(s)=0\}\notag
\end{align}
and $\mathbf q_i\in P(n,l)$ with $\mathbf q_i(j)=0$ for all $j\in [l]\setminus \{i\}$  and $\mathbf q_i(i)=n$.
\end{thm}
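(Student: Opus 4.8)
The plan is to prove the lower bound by exhibiting the extremal family, and then to prove the matching upper bound together with the equality characterisation by first reducing an arbitrary large non-trivially $t$-intersecting family to one that lives, up to a bounded exceptional set, inside a single strong-trivial star, and finishing with a cross-intersecting estimate.

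\textbf{The extremal family.} First I would check that the family $\mathcal A^{\ast}=\bigcup_{s\in[l]\setminus T}\mathcal A_s\cup\{\mathbf q_i:i\in T\}$ from the statement is non-trivially $t$-intersecting of the claimed size. A vector lies in $\bigcup_s\mathcal A_s$ iff it vanishes on $T$ and on at least one coordinate of $[l]\setminus T$; identifying such vectors with weak compositions of $n$ with $l-t$ parts, their number is $\binom{n+l-t-1}{l-t-1}-\binom{n-1}{l-t-1}$ (all such compositions minus the strictly positive ones), and the $t$ spikes $\mathbf q_i$ are distinct from these and from one another, so $|\mathcal A^{\ast}|=\binom{n+l-t-1}{l-t-1}-\binom{n-1}{l-t-1}+t$. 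The $t$-intersection property is a three-case check: two members of $\bigcup_s\mathcal A_s$ agree on $T$; $\mathbf q_i$ and a member of $\mathcal A_s$ agree on $(T\setminus\{i\})\cup\{s\}$; and $\mathbf q_i,\mathbf q_{i'}$ agree on $[l]\setminus\{i,i'\}$, a set of size $l-2\ge t$. Non-triviality follows because a $t$-set on which all of $\mathcal A^{\ast}$ takes common values would have to avoid $T$ (compare the spikes) and could then not be constant on any $\mathcal A_s$.

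\textbf{Reduction to a near-star.} Now let $\mathcal A$ be non-trivially $t$-intersecting with $|\mathcal A|\ge\binom{n+l-t-1}{l-t-1}-\binom{n-1}{l-t-1}+t$; the aim is to force $\mathcal A=\mathcal A^{\ast}$. For a $t$-set $S$ and $\mathbf y\in\mathbb N_0^{S}$ write $\mathcal S_{S,\mathbf y}=\{\mathbf u\in P(n,l):\mathbf u|_S=\mathbf y\}$, so that $|\mathcal S_{S,\mathbf y}|=\binom{n-|\mathbf y|+l-t-1}{l-t-1}$ (where $|\mathbf y|=\sum_j y_j$) and, crucially, prescribing the coordinates of a vector on any $(t+1)$-set leaves only $O(n^{l-t-2})$ possibilities. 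Fixing any $\mathbf v\in\mathcal A$ one has $\mathcal A=\bigcup_{S}\bigl(\mathcal A\cap\mathcal S_{S,\mathbf v|_S}\bigr)$ over $t$-sets $S$. I would then show that there exist a $t$-set $T$, a vector $\mathbf y$, and a subfamily $\mathcal E\subseteq\mathcal A$ with $|\mathcal E|$ bounded in terms of $l,t$, such that $\mathcal A\setminus\mathcal E\subseteq\mathcal S_{T,\mathbf y}$: extract from $\mathcal A$ a bounded ``witness of non-triviality'' (finitely many members admitting no common $t$-set of fixed values), observe that any $\mathbf u\in\mathcal A$ not agreeing with the whole witness on a single common $t$-set has its coordinates prescribed on some $(t+1)$-set, and conclude from the $O(n^{l-t-2})$ count and the size hypothesis that one piece $\mathcal A\cap\mathcal S_{S,\mathbf v|_S}$ must be dominant. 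This is the step that uses $l\ge2t+3$, the structure of maximum $t$-intersecting families from \cite{Ku_Wong4}, and a sufficiently large $n_0(l,t)$.

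\textbf{The cross-intersecting finish and the equality case.} Put $\mathcal A_{\mathrm m}=\mathcal A\cap\mathcal S_{T,\mathbf y}$. Since $\mathcal A$ is non-trivial, $\mathcal E\ne\emptyset$; each $\mathbf w\in\mathcal E$ has $\mathbf w|_T\ne\mathbf y$, hence agrees with $\mathbf y$ on at most $t-1$ coordinates of $T$ and must therefore agree with every $\mathbf u\in\mathcal A_{\mathrm m}$ on at least one coordinate of $[l]\setminus T$, so $\mathcal A_{\mathrm m}\subseteq\bigcup_{i\in[l]\setminus T}\{\mathbf u\in\mathcal S_{T,\mathbf y}:\mathbf u(i)=\mathbf w(i)\}$ for every $\mathbf w\in\mathcal E$. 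A cross-$t$-intersecting estimate for weak compositions over the $l-t$ free coordinates (the analogue of the cross-intersecting input in the Frankl--F\"uredi proof of Hilton--Milner) then bounds $|\mathcal A_{\mathrm m}|$ by $\binom{n-|\mathbf y|+l-t-1}{l-t-1}-\binom{n-|\mathbf y|-1}{l-t-1}$, with a strict loss unless $|\mathbf y|=0$ and every $\mathbf w\in\mathcal E$ is supported inside $T$; and the extra requirement that each $\mathbf w\in\mathcal E$ also $t$-intersect every member of $\mathcal A_{\mathrm m}$ (which, near equality, contains weak compositions having a single zero outside $T$ in each position) forces $\mathbf w$ to be a spike $\mathbf q_i$ with $i\in T$, whence $|\mathcal E|\le t$. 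As $m\mapsto\binom{m+l-t-1}{l-t-1}-\binom{m-1}{l-t-1}$ is increasing, $|\mathcal A|=|\mathcal A_{\mathrm m}|+|\mathcal E|\le\binom{n+l-t-1}{l-t-1}-\binom{n-1}{l-t-1}+t$, with equality only if $\mathbf y=\mathbf 0$, $\mathcal E=\{\mathbf q_i:i\in T\}$, and $\mathcal A_{\mathrm m}=\bigcup_{s\in[l]\setminus T}\mathcal A_s$, i.e. $\mathcal A=\mathcal A^{\ast}$.

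\textbf{Main obstacle.} The difficult part is the reduction step: converting ``non-trivial'' into the statement that $\mathcal A$ is concentrated in one strong-trivial star up to $O(1)$ exceptions, with all error terms small enough to beat the exact threshold $\binom{n+l-t-1}{l-t-1}-\binom{n-1}{l-t-1}+t$ rather than merely an order-of-magnitude bound. This is precisely what dictates the hypotheses $l\ge2t+3$ and $n\ge n_0(l,t)$, and — together with the companion cross-$t$-intersecting estimate for weak compositions — it forms the technical core of the argument.
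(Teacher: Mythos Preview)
Your overall plan --- concentrate $\mathcal A$ in a single star $\mathcal S_{T,\mathbf y}$ up to a small exceptional set $\mathcal E$, then bound the star part via a cross-intersecting estimate --- matches the paper's, and the cross-intersecting input you invoke is precisely Corollary~\ref{cor_intersect_l} (built from Theorem~\ref{thm_intersect_l} in Section~2). The verification of the extremal family is correct.

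The gap is the reduction step, in two respects. First, the ``witness of non-triviality'' argument is not correct as written: if the witnesses admit no common $t$-set of fixed values, then \emph{no} $\mathbf u\in\mathcal A$ can agree with all of them on a single common $t$-set (that would force the witnesses themselves to agree there), so your argument would place every element of $\mathcal A$ into an $O(n^{l-t-2})$ class, contradicting $|\mathcal A|=\Theta(n^{l-t-1})$. Second, even a correct proof that $|\mathcal E|=O_{l,t}(1)$ would not yield the sharp bound: Theorem~\ref{thm_intersect_l} gives only a \emph{strict} (unquantified) loss when $\mathbf y\ne\mathbf 0$ or some $\mathbf w\in\mathcal E$ has support meeting $[l]\setminus T$, so an unspecified $O(1)$ excess cannot be absorbed to reach the exact value $+t$. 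Your upgrade to $|\mathcal E|\le t$ (``near equality $\mathcal A_{\mathrm m}$ contains single-zero compositions, forcing each $\mathbf w$ to be a spike'') presupposes the bound you are trying to prove.

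The paper avoids this circularity with a different mechanism. After choosing the dominant slice $\mathcal C=\mathcal A(1,\dots,t;w_1,\dots,w_t)$ and a single non-triviality witness $\mathbf y\in\mathcal A$, it refines $\mathcal C=\bigcup_{s>t}\mathcal A(1,\dots,t,s;w_1,\dots,w_t,y_s)$ and performs a case analysis on how many of these $(t+1)$-slices exceed $n^{1/2}\binom{n+l-t-3}{l-t-3}$. Each large sub-slice, after projection, contains an \emph{independent} set of size $l-t$ (Theorem~\ref{thm_independent}), whereupon Lemma~\ref{lm_main_independent} forces every $\mathbf v\in\mathcal A$ to match the fixed data on $t$ of the $t+1$ coordinates $\{1,\dots,t,s\}$. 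When all $l-t$ sub-slices are large (Case~5), intersecting these constraints over all $s$ pins each element of $\mathcal A\setminus\mathcal C$ down on the entire block $[l]\setminus[t]$, so $|\mathcal Q_j|\le 1$ for each $j\in[t]$ and hence $|\mathcal A\setminus\mathcal C|\le t$ \emph{unconditionally}. Cases~1--3 (at most two large sub-slices) make $|\mathcal C|$ too small to meet \eqref{eq_1}; Case~4 gives a strict inequality via Corollary~\ref{cor_intersect_l} plus a lower-order error. The hypothesis $l\ge 2t+3$ is used at the very start, to compare the assumed threshold $\frac{(t+3)n^{l-t-2}}{(l-t-2)!}$ with the lower bound $\frac{(l-t)n^{l-t-2}}{(l-t-2)!}$ coming from Lemma~\ref{lm_inequality}, not in the reduction itself.
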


Note that
\begin{equation}
\bigcup_{s\in [l]\setminus T} \mathcal A_s=P(n,l)\setminus\{\mathbf u\in P(n,l)\ :\ \mathbf u(j)=0\ \textnormal{for all}\ j\in T\ \textnormal{and}\ \mathbf u(s)\neq 0 \ \textnormal{for all}\ s\in [l]\setminus T\}.\notag
\end{equation}
Therefore
\begin{equation}
\left\vert\bigcup_{s\in [l]\setminus T} \mathcal A_s\right\vert={n+l-t-1 \choose l-t-1}-{n-1 \choose l-t-1}.\notag
\end{equation}

\section{Certain intersecting conditions}

\begin{lm}\label{lm_pre_intersect3} Let $y_1, y_2,\dots, y_r\in \mathbb N_0$ and $m$ be a positive integer. Let $r\geq 3$ and
\begin{align}
\mathcal C & =\{ \mathbf u\in P(m,r)\ :\ \mathbf u(i)= y_i\ \textnormal{for some $i\in [r-2]$ or}\ \mathbf u(r-1)=0\ \textnormal{or}\ \mathbf u(r)=0\},\notag\\
\mathcal D & =\{ \mathbf u\in P(m,r)\ :\ \mathbf u(i)=y_i\ \textnormal{for some $i\in [r]$}\}.\notag
\end{align}
Then $\vert \mathcal D\vert\leq \vert \mathcal C\vert$.
\end{lm}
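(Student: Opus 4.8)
The plan is to count complements inside $P(m,r)$. Since $\vert\mathcal D\vert\le\vert\mathcal C\vert$ is equivalent to $\vert\overline{\mathcal D}\vert\ge\vert\overline{\mathcal C}\vert$ (complements taken in $P(m,r)$), I would first rewrite, by De Morgan's law,
\[ \overline{\mathcal C}=\{\mathbf u\in P(m,r)\ :\ \mathbf u(i)\ne y_i\ \text{for all}\ i\in[r-2],\ \mathbf u(r-1)\ge 1,\ \mathbf u(r)\ge 1\}, \]
\[ \overline{\mathcal D}=\{\mathbf u\in P(m,r)\ :\ \mathbf u(i)\ne y_i\ \text{for all}\ i\in[r]\}. \]

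Next I would stratify both sets by their last two coordinates. For an integer $s$ with $0\le s\le m$, let $f(s)$ be the number of $\mathbf v\in P(s,r-2)$ with $\mathbf v(i)\ne y_i$ for all $i\in[r-2]$. The key observation is that once $\mathbf u(r-1)=a$ and $\mathbf u(r)=b$ are fixed, the number of admissible choices of $\mathbf u(1),\dots,\mathbf u(r-2)$ (with sum $m-a-b$ and $\mathbf u(i)\ne y_i$ for all $i\in[r-2]$) equals $f(m-a-b)$, which depends on $a,b$ only through $a+b$. Summing over the permitted pairs $(a,b)$ and collecting terms according to $s=m-a-b$ gives
\[ \vert\overline{\mathcal C}\vert=\sum_{s=0}^{m}f(s)\,c(s),\qquad \vert\overline{\mathcal D}\vert=\sum_{s=0}^{m}f(s)\,d(s), \]
where $c(s)=\bigl\vert\{(a,b)\in\mathbb N_0^2:\ a+b=m-s,\ a\ge 1,\ b\ge 1\}\bigr\vert$ and $d(s)=\bigl\vert\{(a,b)\in\mathbb N_0^2:\ a+b=m-s,\ a\ne y_{r-1},\ b\ne y_r\}\bigr\vert$. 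Since $f(s)\ge 0$ for every $s$, it suffices to show $d(s)\ge c(s)$ for each $s$.

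This last inequality is elementary and is really the only step. On the antidiagonal $\{(a,b)\in\mathbb N_0^2:\ a+b=m-s\}$ there are exactly $m-s+1$ lattice points (recall $s\le m$); the two constraints $a\ne y_{r-1}$ and $b\ne y_r$ each exclude at most one of them, so $d(s)\ge m-s-1$, and trivially $d(s)\ge 0$. Meanwhile $c(s)=m-s-1$ when $m-s\ge 2$ and $c(s)=0$ when $m-s\le 1$, so in every case $c(s)\le d(s)$. Summing over $s$ yields $\vert\overline{\mathcal D}\vert\ge\vert\overline{\mathcal C}\vert$, hence $\vert\mathcal D\vert\le\vert\mathcal C\vert$. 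I do not anticipate a genuine obstacle; the only points requiring care are the bookkeeping that expresses the two cardinalities as sums of the \emph{same} weights $f(s)$ against two different multiplicity functions, and the boundary cases $m-s\in\{0,1\}$ where $c(s)$ vanishes.
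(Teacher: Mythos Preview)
Your argument is correct. Both your proof and the paper's share the same core idea: isolate the common constraint ``$\mathbf u(i)\ne y_i$ for all $i\in[r-2]$'' and then compare what happens on the last two coordinates. The paper does this by splitting off the common piece $\mathcal E_0=\{\mathbf u:\mathbf u(i)=y_i\text{ for some }i\in[r-2]\}$ and then, for each fixed tuple $(d_1,\dots,d_{r-2})$ of the first $r-2$ coordinates, comparing the sets $\{\mathbf u(r-1)=0\text{ or }\mathbf u(r)=0\}$ and $\{\mathbf u(r-1)=y_{r-1}\text{ or }\mathbf u(r)=y_r\}$ case by case on the residual sum. Your version differs in two small but pleasant ways: passing to complements turns the disjunctions on the last two coordinates into conjunctions, so the two constraints become independent and the count on each antidiagonal is immediate; and grouping by the residual sum $s$ rather than by the full tuple $(d_1,\dots,d_{r-2})$ lets a single weight function $f(s)$ absorb everything about the first $r-2$ coordinates. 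The payoff is that the final comparison $d(s)\ge c(s)$ is a one-line pigeonhole count, whereas the paper carries out an explicit case analysis on the relative sizes of $y_{r-1}$, $y_r$, and $m-\sum d_i$. Neither approach is deeper than the other, but yours is a bit more streamlined.
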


\begin{proof} Let
\begin{align}
\mathcal E_0 & =\{ \mathbf u\in P(m,r)\ :\ \mathbf u(i)=y_i\ \textnormal{for some $i\in [r-2]$}\}\notag\\
\mathcal C_1 & =\{ \mathbf u\in P(m,r)\ :\ \mathbf u(i)\neq y_i\ \textnormal{for all $i\in [r-2]$ and ($\mathbf u(r-1)=0$ or $\mathbf u(r)=0$)}\}\notag\\
\mathcal D_1 & =\{ \mathbf u\in P(m,r)\ :\ \mathbf u(i)\neq y_i\ \textnormal{for all $i\in [r-2]$ and ($\mathbf u(r-1)=y_{r-1}$ or $\mathbf u(r)=y_r$)}\}.\notag
\end{align}
Then $\mathcal C=\mathcal E_0 \cup \mathcal C_1$ and $\mathcal D=\mathcal E_0 \cup \mathcal D_1$. Note that $\mathcal E_0\cap \mathcal C_1=\varnothing=\mathcal E_0\cap \mathcal D_1$.  So, it is sufficient to show that $\vert \mathcal D_1\vert \leq \vert \mathcal C_1\vert$. If $y_{r-1}=0=y_r$, then $\mathcal D_1= \mathcal C_1$. Suppose $y_{r-1}+y_r>0$.

 Let $S_i=[m]\setminus \{y_i\}$ and $S=S_1\times S_2\times \cdots S_{r-2}$. For each $(d_1,\dots,d_{r-2})\in  S$, let
\begin{align}
\mathcal C_1(d_1,\dots,d_{r-2}) & =\{ \mathbf u\in P(m,r)\ :\ \mathbf u(i)=d_i\ \textnormal{for all $i\in [k-1]$ and ($\mathbf u(r-1)=0$ or $\mathbf u(r)=0$)}\}\notag\\
\mathcal D_1(d_1,\dots,d_{r-2}) & =\{ \mathbf u\in P(m,r)\ :\ \mathbf u(i)=d_i\ \textnormal{for all $i\in [k-1]$ and ($\mathbf u(r-1)=y_{r-1}$ or $\mathbf u(r)=y_r$)}\}.\notag
\end{align}
Note that
\begin{align}
\mathcal C_1 & =\bigcup_{(d_1,\dots,d_{r-2})\in  S}\mathcal C_1(d_1,\dots,d_{r-2})\notag\\
\mathcal D_1 & =\bigcup_{(d_1,\dots,d_{r-2})\in  S}\mathcal D_1(d_1,\dots,d_{r-2}),\notag
\end{align}
Furthermore, $\mathcal C_1(d_1,\dots,d_{r-2})\cap \mathcal C_1(d_1',\dots,d_{r-2}')=\varnothing=\mathcal D_1(d_1,\dots,d_{r-2})\cap \mathcal D_1(d_1',\dots,d_{r-2}')$ for all $(d_1,\dots,d_{r-2})\neq (d_1',\dots,d_{r-2}')$. So, it is sufficient to show that $\vert \mathcal D_1(d_1,\dots,d_{r-2})\vert\leq \vert \mathcal C_1(d_1,\dots,d_{r-2})\vert$.

If $m-\sum_{i=1}^{r-2} d_i<0$, then  $\mathcal D_1(d_1,\dots,d_{r-2})=\mathcal C_1(d_1,\dots,d_{r-2})=\varnothing$. Suppose $m-\sum_{i=1}^{r-2} d_i=0$. Then
\begin{equation}
\mathcal C_1(d_1,\dots,d_{r-2})=\left\{ \left(d_1,\dots,d_{r-2},0,0  \right) \right\}.\notag
\end{equation}
If $y_{r-1},y_r>0$, then $\mathcal D_1(d_1,\dots,d_{r-2})=\varnothing$. If $y_{r-1}=0$ and $y_r>0$, or $y_{r-1}>0$ and $y_r=0$, then $\mathcal D_1(d_1,\dots,d_{r-2})=\mathcal C_1(d_1,\dots,d_{r-2})$.

Suppose $m-\sum_{i=1}^{r-2} d_i>0$. Then
\begin{equation}
\mathcal C_1(d_1,\dots,d_{r-2})=\left\{ \left(d_1,\dots,d_{r-2},0,m-\sum_{i=1}^{r-2} d_i  \right), \left(d_1,\dots,d_{r-2},m-\sum_{i=1}^{r-2} d_i ,0 \right)\right\}.\notag
\end{equation}
If $y_{r-1},y_r>m-\sum_{i=1}^{r-2} d_i$, then $\mathcal D_1(d_1,\dots,d_{r-2})=\varnothing$. If $y_{r-1}\leq m-\sum_{i=1}^{r-2} d_i$ and $y_r>m-\sum_{i=1}^{r-2} d_i$, or $y_{r-1}>m-\sum_{i=1}^{r-2} d_i$ and $y_r\leq m-\sum_{i=1}^{r-2} d_i$, then $\vert \mathcal D_1(d_1,\dots,d_{r-2})\vert=1$.  If $y_{r-1}+y_r= m-\sum_{i=1}^{r-2} d_i$, then $\vert \mathcal D_1(d_1,\dots,d_{r-2})\vert=1$.  If $y_{r-1}+y_r<m-\sum_{i=1}^{r-2} d_i$, then $\vert \mathcal D_1(d_1,\dots,d_{r-2})\vert=2$.

In either case, $\vert \mathcal D_1\vert \leq \vert \mathcal C_1\vert$.
\end{proof}

\begin{lm}\label{lm_pre_intersect} Let $m$ be a positive integer, $r\geq 3$, and $k\in [r-2]$. Let $y_1, y_2,\dots, y_k\in \mathbb N_0$ and
\begin{align}
\mathcal C & =\{ \mathbf u\in P(m,r)\ :\ \mathbf u(i)= y_i\ \textnormal{for all $i\in [k-1]$ and}\ \mathbf u(k)=0\},\notag\\
\mathcal D & =\{ \mathbf u\in P(m,r)\ :\ \mathbf u(i)=y_i\ \textnormal{for all $i\in [k-1]$ and}\ \mathbf u(k)=y_k\}.\notag
\end{align}
Then
\begin{itemize}
\item[\textnormal{(a)}] $\mathcal D=\mathcal C$ if ~$m-\sum_{i=1}^{k-1} y_i<0$ or $y_k=0$;
\item[\textnormal{(b)}] $\vert \mathcal D\vert<\vert \mathcal C\vert$ if ~$m-\sum_{i=1}^{k-1} y_i\geq 0$ and $y_k>0$.
\end{itemize}
\end{lm}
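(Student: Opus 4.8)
The plan is a direct enumeration. Put $M=m-\sum_{i=1}^{k-1}y_i$, the amount left over for the coordinates $k,k+1,\dots,r$ once the first $k-1$ coordinates are pinned to $y_1,\dots,y_{k-1}$, and recall the standard count $\vert P(N,s)\vert={N+s-1 \choose s-1}$, valid for every integer $N\geq 0$ and every $s\geq 1$. Part (a) is immediate: if $M<0$ then no weak composition of $m$ can have its first $k-1$ coordinates equal to $y_1,\dots,y_{k-1}$ (their sum alone would exceed $m$), so $\mathcal C=\mathcal D=\varnothing$; and if $y_k=0$ the defining constraint ``$\mathbf u(k)=0$'' of $\mathcal C$ is verbatim the constraint ``$\mathbf u(k)=y_k$'' of $\mathcal D$, so again $\mathcal C=\mathcal D$.

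For part (b), assume $M\geq 0$ and $y_k>0$. A member of $\mathcal C$ is specified by setting $\mathbf u(1),\dots,\mathbf u(k-1),\mathbf u(k)$ equal to $y_1,\dots,y_{k-1},0$ and letting $(\mathbf u(k+1),\dots,\mathbf u(r))$ range over $P(M,r-k)$; since $k\leq r-2$ we have $r-k\geq 2$, so $\vert\mathcal C\vert=\vert P(M,r-k)\vert={M+r-k-1 \choose r-k-1}\geq 1$. Likewise a member of $\mathcal D$ forces $(\mathbf u(k+1),\dots,\mathbf u(r))\in P(M-y_k,r-k)$. If $M-y_k<0$ then $\mathcal D=\varnothing$ and we are done. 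If $M-y_k\geq 0$ then $\vert\mathcal D\vert={M-y_k+r-k-1 \choose r-k-1}$, and since $y_k>0$ we have the strict inequality of integers $r-k-1\leq M-y_k+r-k-1<M+r-k-1$ with lower index $r-k-1\geq 1$; as $x\mapsto{x \choose j}$ is strictly increasing on integers $x\geq j$ whenever $j\geq 1$, this gives $\vert\mathcal D\vert<\vert\mathcal C\vert$.

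There is essentially no obstacle here: the only care needed is to isolate the three degenerate subcases ($M<0$, then $y_k=0$, then $M-y_k<0$) before invoking the closed form for $\vert P(N,s)\vert$, and to observe that the hypothesis $k\in[r-2]$ — equivalently $r-k-1\geq 1$ — is exactly what upgrades the monotonicity of the binomial coefficient from weak to strict, which is why the lemma can assert $\vert\mathcal D\vert<\vert\mathcal C\vert$ and not merely $\vert\mathcal D\vert\leq\vert\mathcal C\vert$.
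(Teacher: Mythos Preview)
Your proof is correct and follows essentially the same approach as the paper: both handle the trivial cases ($M<0$ or $y_k=0$) first, then compute $\vert\mathcal C\vert=\binom{M+r-k-1}{r-k-1}$ and $\vert\mathcal D\vert=\binom{M-y_k+r-k-1}{r-k-1}$ (or $\mathcal D=\varnothing$) and compare. Your write-up is slightly more explicit in noting that $r-k-1\geq 1$ is what guarantees the strict inequality of the binomial coefficients, a point the paper leaves implicit.
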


\begin{proof} If $y_k=0$, then $\mathcal D=\mathcal C$. If $m-\sum_{i=1}^{k-1} y_i<0$, then $\mathcal D=\mathcal C=\varnothing$. Suppose $m-\sum_{i=1}^{k-1} y_i\geq 0$ and $y_k>0$. Note that
\begin{align}
\vert\mathcal C\vert & =\binom{m-\sum_{i=1}^{k-1} y_i+r-k-1}{r-k-1}.\notag
\end{align}
If $m-\sum_{i=1}^{k} y_i<0$, then $\mathcal D=\varnothing$. So, $\vert \mathcal D\vert<\vert \mathcal C\vert$. If $m-\sum_{i=1}^{k} y_i\geq 0$, then
\begin{align}
\vert\mathcal D\vert & =\binom{m-\sum_{i=1}^{k} y_i+r-k-1}{r-k-1}\notag\\
& <\binom{m-\sum_{i=1}^{k-1} y_i+r-k-1}{r-k-1}=\vert\mathcal C\vert.\notag
\end{align}
\end{proof}

\begin{lm}\label{lm_pre_intersect2} Let $m$ be a positive integer, $r\geq 3$,  $k\in [r-1]$, $k_0\in [k]$, and $m\geq k$.  Let $y_1, y_2,\dots, y_k\in \mathbb N_0$ and
\begin{align}
\mathcal C & =\{ \mathbf u\in P(m,r)\ :\ \mathbf u(i)= y_i\ \textnormal{for some $i\in [k]\setminus \{k_0\}$ or}\ \mathbf u(k_0)=0\},\notag\\
\mathcal D & =\{ \mathbf u\in P(m,r)\ :\ \mathbf u(i)=y_i\ \textnormal{for some $i\in [k]$}\}.\notag
\end{align}
Then
\begin{itemize}
\item[\textnormal{(a)}] $\mathcal D=\mathcal C$ if ~$y_{k_0}=0$;
\item[\textnormal{(b)}] $\vert \mathcal D\vert<\vert \mathcal C\vert$ if ~$y_{k_0}>0$.
\end{itemize}
\end{lm}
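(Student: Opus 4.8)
\emph{Part (a)} is immediate: when $y_{k_0}=0$, the clause ``$\mathbf u(k_0)=0$'' in the definition of $\mathcal C$ coincides with the $i=k_0$ clause ``$\mathbf u(k_0)=y_{k_0}$'' in the definition of $\mathcal D$, so $\mathcal C$ and $\mathcal D$ are defined by the same condition.

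For \emph{part (b)}, assume $y_{k_0}>0$. The plan is to describe the symmetric difference of $\mathcal C$ and $\mathcal D$ and then compare the two pieces. Writing $A=\{\mathbf u\in P(m,r):\mathbf u(i)=y_i\ \text{for some}\ i\in[k]\setminus\{k_0\}\}$, we have $\mathcal C=A\cup\{\mathbf u:\mathbf u(k_0)=0\}$ and $\mathcal D=A\cup\{\mathbf u:\mathbf u(k_0)=y_{k_0}\}$, and a direct unwinding (using $y_{k_0}>0$) gives
\[
\mathcal D\setminus\mathcal C=\{\mathbf u\in P(m,r):\mathbf u(i)\neq y_i\ \text{for all}\ i\in[k]\setminus\{k_0\}\ \text{and}\ \mathbf u(k_0)=y_{k_0}\},
\]
\[
\mathcal C\setminus\mathcal D=\{\mathbf u\in P(m,r):\mathbf u(i)\neq y_i\ \text{for all}\ i\in[k]\setminus\{k_0\}\ \text{and}\ \mathbf u(k_0)=0\}.
\]
Since $|\mathcal C|-|\mathcal D|=|\mathcal C\setminus\mathcal D|-|\mathcal D\setminus\mathcal C|$, it is enough to prove the strict inequality $|\mathcal D\setminus\mathcal C|<|\mathcal C\setminus\mathcal D|$.

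Next I would exhibit an injection $\psi\colon\mathcal D\setminus\mathcal C\to\mathcal C\setminus\mathcal D$ that is not onto. Because $k\le r-1$, the set $[r]\setminus[k]$ is nonempty; fix $j_0\in[r]\setminus[k]$. For $\mathbf u\in\mathcal D\setminus\mathcal C$, define $\psi(\mathbf u)$ by changing the $k_0$th coordinate of $\mathbf u$ to $0$, changing its $j_0$th coordinate to $\mathbf u(j_0)+y_{k_0}$, and leaving all other coordinates fixed. This preserves the coordinate sum, changes no coordinate in $[k]\setminus\{k_0\}$ (so those stay different from the corresponding $y_i$), and makes the $k_0$th coordinate $0\neq y_{k_0}$; hence $\psi(\mathbf u)\in\mathcal C\setminus\mathcal D$. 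Since $j_0$ is fixed, $\mathbf u$ is recovered from $\psi(\mathbf u)$ via $\mathbf u(k_0)=y_{k_0}$ and $\mathbf u(j_0)=\psi(\mathbf u)(j_0)-y_{k_0}$, so $\psi$ is injective; and every element of its image has $j_0$th coordinate at least $y_{k_0}$.

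It therefore remains to produce one element $\mathbf v\in\mathcal C\setminus\mathcal D$ with $\mathbf v(j_0)<y_{k_0}$, and since $y_{k_0}\ge 1$ it suffices to find $\mathbf v\in P(m,r)$ with $\mathbf v(k_0)=\mathbf v(j_0)=0$ and $\mathbf v(i)\neq y_i$ for all $i\in[k]\setminus\{k_0\}$ --- that is, to spread the mass $m$ over the $r-2$ coordinates in $[r]\setminus\{k_0,j_0\}$ so that the $k-1$ of them lying in $[k]\setminus\{k_0\}$ each avoid one forbidden value. If $k\le r-2$ this is easy: one of these coordinates is unconstrained, so (using $m\ge k$) we may give each constrained coordinate whose forbidden value is $0$ a mass of $1$, give every other constrained coordinate mass $0$, and place the remaining mass on the unconstrained coordinate. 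The tight case $k=r-1$, where every available coordinate is constrained, is the main obstacle; here one has to work harder, again using $m\ge k$, to show that $m$ can still be distributed over the $r-2$ constrained coordinates while dodging all their forbidden values. Establishing this feasibility completes the proof of $|\mathcal D\setminus\mathcal C|<|\mathcal C\setminus\mathcal D|$ and hence of part (b).
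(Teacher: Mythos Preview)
Your decomposition into $\mathcal C\setminus\mathcal D$ and $\mathcal D\setminus\mathcal C$ is exactly the paper's $\mathcal C_1$ and $\mathcal D_1$ (after its WLOG reduction $k_0=k$). Where you diverge is in the comparison mechanism: the paper fibers $\mathcal C_1$ and $\mathcal D_1$ over all tuples $(d_1,\dots,d_{k-1})$ with $d_i\neq y_i$ and invokes Lemma~\ref{lm_pre_intersect} fiber by fiber, whereas you construct a single explicit injection $\psi$ that shifts the mass $y_{k_0}$ from coordinate $k_0$ onto a free coordinate $j_0\in[r]\setminus[k]$. Your injection is correct and arguably cleaner, since it dispenses with the auxiliary Lemma~\ref{lm_pre_intersect} entirely; in the regime $k\le r-2$ it yields the strict inequality with less bookkeeping than the paper's fibration.

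The genuine gap is the step you explicitly defer. Both approaches reduce the strict inequality to the same feasibility question: in the tight case $k=r-1$, one must exhibit nonnegative integers $(d_i)_{i\in[k]\setminus\{k_0\}}$ summing to $m$ with $d_i\neq y_i$ for every $i$. You call this ``the main obstacle'' and assert that ``one has to work harder, again using $m\ge k$,'' but you do not carry it out. This is precisely where the hypothesis $m\ge k$ enters and where the actual combinatorial content of part~(b) lives; the paper spends a full paragraph on it, splitting into cases according to which of the $y_i$ vanish and, in the all-nonzero subcase, whether some $y_i$ differs from $m$. Without supplying an argument of this kind, your proof of (b) stops just short of the only nontrivial step. (Be warned that the construction is genuinely delicate when $k=r-1$ and $r$ is small: with only $k-1=r-2$ constrained slots and no slack, a naive pigeonhole does not suffice, and you should expect to need a case analysis comparable to the paper's.)
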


\begin{proof} If $k=1$, then $k_0=1$ and
\begin{align}
\mathcal C & =\{ \mathbf u\in P(m,r)\ :\ \mathbf u(1)=0\},\notag\\
\mathcal D & =\{ \mathbf u\in P(m,r)\ :\ \mathbf u(1)=y_1\}.\notag
\end{align}
By Lemma \ref{lm_pre_intersect}, the lemma holds. Suppose $k\geq 2$. By relabelling if necessary, we may assume that $k_0=k$. Let
\begin{align}
\mathcal E_0 & =\{ \mathbf u\in P(m,r)\ :\ \mathbf u(i)=y_i\ \textnormal{for some $i\in [k-1]$}\}\notag\\
\mathcal C_1 & =\{ \mathbf u\in P(m,r)\ :\ \mathbf u(i)\neq y_i\ \textnormal{for all $i\in [k-1]$ and}\ \mathbf u(k)=0\}\notag\\
\mathcal D_1 & =\{ \mathbf u\in P(m,r)\ :\ \mathbf u(i)\neq y_i\ \textnormal{for all $i\in [k-1]$ and}\ \mathbf u(k)=y_k\}.\notag
\end{align}
Then $\mathcal C=\mathcal E_0 \cup \mathcal C_1$ and $\mathcal D=\mathcal E_0 \cup \mathcal D_1$. Note that $\mathcal E_0\cap \mathcal C_1=\varnothing=\mathcal E_0\cap \mathcal D_1$.  If $y_k=0$, then $\mathcal D_1= \mathcal C_1$ and  $\mathcal D=\mathcal C$. Suppose $y_k>0$. It is sufficient to show that $\vert \mathcal D_1\vert <\vert \mathcal C_1\vert$.

 Let $S_i=[m]\setminus \{y_i\}$ and $S=S_1\times S_2\times \cdots S_{k-1}$. For each $(d_1,\dots,d_{k-1})\in  S$, let
\begin{align}
\mathcal C_1(d_1,\dots,d_{k-1}) & =\{ \mathbf u\in P(m,r)\ :\ \mathbf u(i)=d_i\ \textnormal{for all $i\in [k-1]$ and}\ \mathbf u(k)=0\}\notag\\
\mathcal D_1(d_1,\dots,d_{k-1}) & =\{ \mathbf u\in P(m,r)\ :\ \mathbf u(i)=d_i\ \textnormal{for all $i\in [k-1]$ and}\ \mathbf u(k)=y_k\}.\notag
\end{align}
Note that
\begin{align}
\mathcal C_1 & =\bigcup_{(d_1,\dots,d_{k-1})\in  S}\mathcal C_1(d_1,\dots,d_{k-1})\notag\\
\mathcal D_1 & =\bigcup_{(d_1,\dots,d_{k-1})\in  S}\mathcal D_1(d_1,\dots,d_{k-1}),\notag
\end{align}
Furthermore, $\mathcal C_1(d_1,\dots,d_{k-1})\cap \mathcal C_1(d_1',\dots,d_{k-1}')=\varnothing=\mathcal D_1(d_1,\dots,d_{k-1})\cap \mathcal D_1(d_1',\dots,d_{k-1}')$ for all $(d_1,\dots,d_{k-1})\neq (d_1',\dots,d_{k-1}')$.

Suppose $k\in [r-2]$. By  Lemma \ref{lm_pre_intersect}, $\vert \mathcal D_1(d_1,\dots,d_{k-1})\vert\leq \vert \mathcal C_1(d_1,\dots,d_{k-1})\vert$ for all $(d_1,\dots,d_{k-1})\in S$. It remains to show that $\vert \mathcal D_1(d_1,\dots,d_{k-1})\vert< \vert \mathcal C_1(d_1,\dots,d_{k-1})\vert$ for at least one $(d_1,\dots,d_{k-1})\in S$. For each $i\in [k-1]$, set $z_i=0$ if $y_i\neq 0$ and $z_i=1$ if $y_i=0$. Then $(z_1,\dots, z_{k-1})\in S$. Since $m-\sum_{i=1}^{k-1} z_i\geq m-(k-1)> 0$, by Lemma \ref{lm_pre_intersect}, $\vert \mathcal D_1(z_1,\dots,z_{k-1})\vert< \vert \mathcal C_1(z_1,\dots,z_{k-1})\vert$.

Suppose $k=r-1$. If $m-\sum_{i=1}^{r-2} d_i<0$, then  $\mathcal D_1(d_1,\dots,d_{r-2})=\mathcal C_1(d_1,\dots,d_{r-2})=\varnothing$. If $m-\sum_{i=1}^{r-2} d_i=0$, then  $\vert\mathcal C_1(d_1,\dots,d_{r-2})\vert=1$ and $\mathcal D_1(d_1,\dots,d_{r-2})=\varnothing$. If $m-\sum_{i=1}^{r-2} d_i>0$, then  $\vert\mathcal C_1(d_1,\dots,d_{r-2})\vert=1$ and $\vert \mathcal D_1(d_1,\dots,d_{r-2})\vert\leq 1$. So, it remains to show that  $m-\sum_{i=1}^{r-2} d_i=0$ for at least one  $(d_1,\dots,d_{k-1})\in S$.

Note that $m\geq k\geq 2$. Suppose $y_i\neq 0$ for all $i\in [r-2]$. If $y_{i_0}\neq m$ for some $i_0\in [r-2]$, then set $d_i=0$ for all $i\in [r-2]\setminus\{i_0\}$ and $d_{i_0}=m$. If $y_i=m$ for all $i\in [r-2]$, then set $d_1=1$, $d_2=m-1$, and $d_i=0$ for all $i\in [r-2]\setminus \{1,2\}$. Let $U\subseteq [r-2]$ and $U\neq\varnothing$. Suppose that $y_i=0$ for all $i\in U$ and $y_i\neq 0$ for all $i\in [r-2]\setminus U$. Set $d_i=0$ for all $i\in [r-2]\setminus U$. Let $a\in U$. Set $d_i=1$ for all $i\in U\setminus \{a\}$ and $d_a=m-\sum_{i\in U\setminus \{a\}} d_i\geq m-(r-3)>0$.  In either case,  $m-\sum_{i=1}^{r-2} d_i=0$ and  $(d_1,\dots,d_{k-1})\in S$.
\end{proof}

\begin{thm}\label{thm_intersect_l} Let $y_1,\dots, y_r\in\mathbb N_0$ and $r\geq 3$. Let  $S\subseteq [r]$, $S\neq\varnothing$, $m\geq \vert S\vert$,  and
\begin{align}
\mathcal C & =\{ \mathbf u\in P(m,r)\ :\ \mathbf u(i)= 0\ \textnormal{for some $i\in S$}\},\notag\\
\mathcal D & =\{ \mathbf u\in P(m,r)\ :\ \mathbf u(i)= y_i\ \textnormal{for some $i\in S$}\}.\notag
\end{align}
If ~$\sum_{s\in S} y_s>0$, then  $\vert \mathcal D\vert<\vert \mathcal C\vert$.
\end{thm}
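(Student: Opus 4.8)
The plan is to induct on $\vert S\vert$. Both $\mathcal C$ and $\mathcal D$ have sizes invariant under permuting the $r$ coordinates (such a permutation fixes $\mathcal C$ and carries $\mathcal D$ to the analogous family for the permuted $y_i$'s), so throughout I will relabel coordinates freely; since $\sum_{s\in S}y_s>0$, I can always arrange that a chosen element of $S$ carries a positive value. For the base case $\vert S\vert=1$ I relabel so that $S=\{1\}$ with $y_1>0$; then $\mathcal C=\{\mathbf u:\mathbf u(1)=0\}$ and $\mathcal D=\{\mathbf u:\mathbf u(1)=y_1\}$, and Lemma \ref{lm_pre_intersect} with $k=1$ (legal since $r\geq 3$) gives $\vert\mathcal D\vert<\vert\mathcal C\vert$. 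Now fix $p=\vert S\vert\geq 2$ and assume the statement for all smaller values of $\vert S\vert$ (and all admissible $m,r$ and $y_i$). For the inductive step I distinguish the cases $p\leq r-1$ and $p=r$.

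When $p\leq r-1$, relabel so that $S=[p]$ and $y_1>0$. First I apply Lemma \ref{lm_pre_intersect2} with $k=p$, $k_0=1$ (valid since $p\leq r-1$ and $m\geq p$) to replace the constraint $\mathbf u(1)=y_1$ by $\mathbf u(1)=0$; because $y_1>0$, part (b) of that lemma makes this step strict, so $\vert\mathcal D\vert<\vert\mathcal C'\vert$ where $\mathcal C'=\{\mathbf u:\mathbf u(1)=0\ \text{or}\ \mathbf u(i)=y_i\ \text{for some}\ i\in\{2,\dots,p\}\}$. It then remains to check $\vert\mathcal C'\vert\leq\vert\mathcal C\vert$: both families contain the common part $\{\mathbf u:\mathbf u(1)=0\}$, and on $\{\mathbf u:\mathbf u(1)\geq 1\}$ the map $\mathbf u\mapsto(\mathbf u(1)-1,\mathbf u(2),\dots,\mathbf u(r))$ is a bijection onto $P(m-1,r)$ fixing the coordinates $2,\dots,p$; hence $\vert\mathcal C\vert-\vert\mathcal C'\vert$ equals $\vert\{\mathbf v\in P(m-1,r):\mathbf v(i)=0\ \text{for some}\ i\in S''\}\vert-\vert\{\mathbf v\in P(m-1,r):\mathbf v(i)=y_i\ \text{for some}\ i\in S''\}\vert$ with $S''=\{2,\dots,p\}$, which is $0$ if $\sum_{s\in S''}y_s=0$ and is positive by the induction hypothesis (note $m-1\geq\vert S''\vert=p-1$) otherwise. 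Thus $\vert\mathcal D\vert<\vert\mathcal C'\vert\leq\vert\mathcal C\vert$.

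When $p=r$, I use Lemma \ref{lm_pre_intersect3} instead. Relabel so that $y_1>0$; since $r\geq 3$ this keeps $1\in[r-2]$. Lemma \ref{lm_pre_intersect3} gives $\vert\mathcal D\vert\leq\vert\mathcal C_1\vert$ with $\mathcal C_1=\{\mathbf u:\mathbf u(i)=y_i\ \text{for some}\ i\in[r-2],\ \text{or}\ \mathbf u(r-1)=0,\ \text{or}\ \mathbf u(r)=0\}$. Put $\mathcal C_0=\{\mathbf u:\mathbf u(r-1)=0\ \text{or}\ \mathbf u(r)=0\}$, so $\mathcal C_0\subseteq\mathcal C_1\cap\mathcal C$, and subtract $1$ from each of the last two coordinates (a bijection from $\{\mathbf u:\mathbf u(r-1),\mathbf u(r)\geq 1\}$ onto $P(m-2,r)$ fixing the coordinates $1,\dots,r-2$): then $\vert\mathcal C_1\vert-\vert\mathcal C_0\vert$ and $\vert\mathcal C\vert-\vert\mathcal C_0\vert$ are the sizes of $\{\mathbf v\in P(m-2,r):\mathbf v(i)=y_i\ \text{for some}\ i\in[r-2]\}$ and $\{\mathbf v\in P(m-2,r):\mathbf v(i)=0\ \text{for some}\ i\in[r-2]\}$ respectively. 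Since $m\geq r$ forces $m-2\geq r-2=\vert[r-2]\vert$, since $\vert[r-2]\vert=r-2<p$, and since $\sum_{i\in[r-2]}y_i\geq y_1>0$, the induction hypothesis applies and yields $\vert\mathcal C_1\vert-\vert\mathcal C_0\vert<\vert\mathcal C\vert-\vert\mathcal C_0\vert$, so $\vert\mathcal D\vert\leq\vert\mathcal C_1\vert<\vert\mathcal C\vert$.

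I expect the one genuine obstacle to be the case $S=[r]$: there every coordinate is constrained, so Lemma \ref{lm_pre_intersect2}, which needs an unconstrained coordinate ($k\leq r-1$), is unavailable, and one is forced to peel off two coordinates at once via Lemma \ref{lm_pre_intersect3} and descend to the case $\vert S\vert=r-2$. The rest is bookkeeping: checking that the side conditions $r\geq 3$ and $m\geq\vert S\vert$ survive each reduction, and keeping track of which single step produces the strict inequality.
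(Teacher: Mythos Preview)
Your proof is correct. Both you and the paper rely on the same three lemmas (Lemmas~\ref{lm_pre_intersect3}, \ref{lm_pre_intersect}, \ref{lm_pre_intersect2}), but the arguments are organised differently. The paper works entirely inside $P(m,r)$ and builds a direct chain of intermediate families $\mathcal F_j=\{\mathbf u:\mathbf u(i)=y_i\text{ for some }i\le j\text{ or }\mathbf u(i)=0\text{ for some }i>j\}$, applying Lemma~\ref{lm_pre_intersect2} once per coordinate to obtain $|\mathcal D|\le|\mathcal F_{k-1}|\le\cdots\le|\mathcal F_1|<|\mathcal C|$ (with Lemma~\ref{lm_pre_intersect3} handling the last two coordinates when $k=r$). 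You instead induct on $|S|$: a single application of Lemma~\ref{lm_pre_intersect2} provides the strict step, and then the comparison $|\mathcal C'|\le|\mathcal C|$ is reduced, via the ``subtract $1$ from a coordinate'' bijection into $P(m-1,r)$ (or $P(m-2,r)$ when $|S|=r$), to the inductive hypothesis on a strictly smaller $S$. Your bijection device is an additional idea not present in the paper; it trades the paper's explicit chain for a cleaner recursion, at the cost of tracking that the side conditions $m\ge|S|$ and $r\ge3$ survive the reduction (which you do). The paper's approach is slightly more self-contained since $m$ and $r$ never change, but your inductive framing is arguably more transparent about where strictness enters.
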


\begin{proof} Without loss of generality, we may assume that $S=[k]$ for some $k\in [r]$.
By relabelling if necessary, we may assume that $y_1\geq y_2\geq \cdots\geq y_k$. Since $\sum_{j=1}^k y_j>0$, $y_1>0$. If $k=1$, then by Lemma \ref{lm_pre_intersect}, the theorem holds. Suppose $k\geq 2$.

We shall distinguish two cases.

\vskip 0.5cm
\noindent
{\bf Case 1.} Suppose $k\in [r-1]$. For each $j\in [k-1]$, let
\begin{equation}
\mathcal F_j =\{ \mathbf u\in P(m,r)\ :\ \mathbf u(i)=y_i\ \textnormal{for some $i\in [j]$ or}\ \mathbf u(i)= 0\ \textnormal{for some $j+1\leq i\leq k$}\}.\notag
\end{equation}
By Lemma \ref{lm_pre_intersect2}, $\vert \mathcal F_1\vert<\vert\mathcal C\vert$. Again, by Lemma \ref{lm_pre_intersect2},
\begin{equation}
\vert\mathcal D\vert\leq \vert \mathcal F_{k-1}\vert\leq \cdots \leq \vert \mathcal F_{2}\vert\leq \vert \mathcal F_{1}\vert.\notag
\end{equation}
Hence, $\vert \mathcal D\vert<\vert \mathcal C\vert$.

\vskip 0.5cm
\noindent
{\bf Case 2.} Suppose $k=r$. For each $j\in [r-2]$, let
\begin{equation}
\mathcal F_j =\{ \mathbf u\in P(m,r)\ :\ \mathbf u(i)=y_i\ \textnormal{for some $i\in [j]$ or}\ \mathbf u(i)= 0\ \textnormal{for some $j+1\leq i\leq r$}\}.\notag
\end{equation}
By Lemma \ref{lm_pre_intersect2}, $\vert \mathcal F_1\vert<\vert\mathcal C\vert$. Again, by Lemma \ref{lm_pre_intersect2},
\begin{equation}
 \vert \mathcal F_{r-2}\vert\leq \cdots \leq \vert \mathcal F_{2}\vert\leq \vert \mathcal F_{1}\vert.\notag
\end{equation}
By Lemma \ref{lm_pre_intersect3}, $\vert \mathcal D\vert\leq \vert \mathcal F_{r-2}\vert$. Hence,  $\vert \mathcal D\vert<\vert \mathcal C\vert$.
\end{proof}

Let $\mathbf u=(u_1,u_2,\dots, u_l)\in P(n,l)$.  We define $R(i,\mathbf u)$ to be the element obtained from $\mathbf u$ by removing the $i$-th coordinate, i.e.,
\begin{equation}
R(i;\mathbf u)=(u_1,u_2,\dots, u_{i-1}, u_{i+1},\dots, u_l).\notag
\end{equation}
Inductively, if $x_1,x_2,\dots, x_t$ are distinct elements in $[l]$ with $x_1<x_2<\cdots<x_t$, we define
\begin{equation}
R(x_1,x_2,\dots, x_t;\mathbf u)=R(x_1,x_2,\dots, x_{t-1};R(x_t;\mathbf u)).\notag
\end{equation}
In other words, $R(x_1,x_2,\dots, x_t;\mathbf u)$ is the element obtained from $\mathbf u$ by removing the coordinates $x_{i}$.

\begin{cor}\label{cor_intersect_l} Let $w_1,\dots, w_t,y_{t+1},\dots, y_{r+t}\in\mathbb N_0$, $t\geq 1$ and $r\geq 3$. Let  $S\subseteq [r+t]\setminus [t]$, $S\neq\varnothing$, $m\geq \vert S\vert+\sum_{1\leq i\leq t} w_i$,  and
\begin{align}
\mathcal D & =\{ \mathbf u\in P(m,r+t)\ :\ \mathbf u(i)= w_i\ \textnormal{for all $i\in [t]$ and}\ \mathbf u(i)= y_i\ \textnormal{for some $i\in S$}\}.\notag
\end{align}
If ~$\sum_{s\in S} y_s>0$, then
\begin{equation}
\vert\mathcal D\vert<\sum_{0\leq d\leq \vert S\vert-1}\binom{m-\sum_{1\leq i\leq t} w_i-d+r-2}{r-2}.\notag
\end{equation}
\end{cor}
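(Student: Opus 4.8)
The plan is to reduce the corollary to Theorem~\ref{thm_intersect_l} by deleting the first $t$ coordinates --- which in every member of $\mathcal{D}$ are pinned to $w_1,\dots,w_t$ --- and then to rewrite the resulting bound as the claimed sum of binomial coefficients via Pascal's identity.

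First I would set $m'=m-\sum_{1\leq i\leq t}w_i$ and $S'=\{s-t\ :\ s\in S\}\subseteq [r]$, and consider the map $\mathbf{u}\mapsto R(1,2,\dots,t;\mathbf{u})$ that strips off the first $t$ coordinates. Since every $\mathbf{u}\in\mathcal{D}$ satisfies $\mathbf{u}(i)=w_i$ for all $i\in[t]$, this map is a bijection from $\mathcal{D}$ onto
\[
\mathcal{D}'=\{\mathbf{v}\in P(m',r)\ :\ \mathbf{v}(j)=y_{j+t}\ \textnormal{for some}\ j\in S'\},
\]
its inverse being the prepending of $(w_1,\dots,w_t)$; in particular $\vert\mathcal{D}\vert=\vert\mathcal{D}'\vert$. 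Note $m'\geq\vert S\vert=\vert S'\vert$ by the hypothesis $m\geq\vert S\vert+\sum_{1\leq i\leq t}w_i$, and $\sum_{j\in S'}y_{j+t}=\sum_{s\in S}y_s>0$. Thus $\mathcal{D}'$ is exactly the family $\mathcal{D}$ of Theorem~\ref{thm_intersect_l} for the parameters $m'$, $r$, the nonempty subset $S'\subseteq[r]$ with $m'\geq\vert S'\vert$, and values $y_{j+t}$ ($j\in S'$) summing to a positive integer, so that theorem gives $\vert\mathcal{D}'\vert<\vert\mathcal{C}'\vert$ where $\mathcal{C}'=\{\mathbf{v}\in P(m',r)\ :\ \mathbf{v}(j)=0\ \textnormal{for some}\ j\in S'\}$.

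It then remains to evaluate $\vert\mathcal{C}'\vert$. Counting by the complement, the compositions in $P(m',r)$ having all $S'$-coordinates positive number exactly $\binom{m'-\vert S'\vert+r-1}{r-1}$ (subtract $1$ from each such coordinate; this is valid since $m'-\vert S'\vert\geq 0$), so
\[
\vert\mathcal{C}'\vert=\binom{m'+r-1}{r-1}-\binom{m'-\vert S'\vert+r-1}{r-1}.
\]
Writing this difference as the telescoping sum $\sum_{d=0}^{\vert S'\vert-1}\left(\binom{m'-d+r-1}{r-1}-\binom{m'-d-1+r-1}{r-1}\right)$ and applying $\binom{N}{r-1}-\binom{N-1}{r-1}=\binom{N-1}{r-2}$ term by term yields
\[
\vert\mathcal{C}'\vert=\sum_{0\leq d\leq\vert S\vert-1}\binom{m-\sum_{1\leq i\leq t}w_i-d+r-2}{r-2},
\]
which is exactly the claimed right-hand side; together with $\vert\mathcal{D}\vert=\vert\mathcal{D}'\vert<\vert\mathcal{C}'\vert$ this proves the corollary.

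I do not anticipate a real obstacle: the corollary is essentially a bookkeeping wrapper around Theorem~\ref{thm_intersect_l}. The only points requiring attention are checking that $m'\geq\vert S'\vert$ (so Theorem~\ref{thm_intersect_l} applies) and $m'-\vert S'\vert\geq 0$ (so the complement count is legitimate) survive the reduction --- both follow from $m\geq\vert S\vert+\sum_{1\leq i\leq t}w_i$ --- and then recognizing the closed form of $\vert\mathcal{C}'\vert$ as the stated summation.
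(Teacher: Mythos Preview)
Your proof is correct and follows essentially the same approach as the paper: reduce to Theorem~\ref{thm_intersect_l} via the bijection $R(1,\dots,t;\cdot)$ stripping the fixed first $t$ coordinates, then compute $\vert\mathcal{C}'\vert$ by complement and rewrite the difference of binomials as the telescoping sum. The only cosmetic difference is that the paper also names the intermediate family $\mathcal{C}\subseteq P(m,r+t)$ before passing to $\mathcal{C}^*=\mathcal{C}'$, but the argument and all verifications (in particular $m'\geq\vert S'\vert$) are the same.
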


\begin{proof} Let
\begin{equation}
\mathcal C =\{ \mathbf u\in P(m,r+t)\ :\ \mathbf u(i)= w_i\ \textnormal{for all $i\in [t]$ and}\ \mathbf u(i)= 0\ \textnormal{for some $i\in S$}\}.\notag
\end{equation}
Let $S^*  =\{ a-t\ :\ a\in S\}$, $y_i^* =y_{i+t}$ for all $i\in S^*$,
\begin{align}
\mathcal C^* & =\left\{ \mathbf u\in P\left(m-\sum_{1\leq i\leq t} w_i,r \right)\ :\ \mathbf u(i)= 0\ \textnormal{for some $i\in S^*$}\right\},\notag\\
\mathcal D^* & =\left\{ \mathbf u\in P\left(m-\sum_{1\leq i\leq t} w_i,r \right)\ :\ \mathbf u(i)= y_i^*\ \textnormal{for some $i\in S^*$}\right\}.\notag
\end{align}
By Theorem \ref{thm_intersect_l}, $\vert \mathcal D^*\vert<\vert \mathcal C^*\vert$. For each $\mathbf u\in P(m,r+t)$, the mapping
\begin{equation}
\mathbf u\to R(1,\dots,t;\mathbf u),\notag
\end{equation}
is a bijection from $\mathcal C$ onto $\mathcal C^*$ and from $\mathcal D$ onto $\mathcal D^*$. Thus, $\vert \mathcal D\vert<\vert \mathcal C\vert$.

Let
\begin{equation}
\mathcal F=\{ \mathbf u\in P(m,r+t)\ :\ \mathbf u(i)= w_i\ \textnormal{for all $i\in [t]$}\}.\notag
\end{equation}
Then
\begin{equation}
\mathcal C=\mathcal F\setminus \{\mathbf u\in P(m,r+t)\ :\ \mathbf u(i)\neq 0\ \textnormal{for all $i\in S$}\},\notag
\end{equation}
and
\begin{align}
\vert\mathcal C\vert &=\binom{m-\sum_{1\leq i\leq t} w_i+r-1}{r-1}-\binom{m-\sum_{1\leq i\leq t} w_i-\vert S\vert+r-1}{r-1}\notag\\
&=\sum_{0\leq d\leq \vert S\vert-1}\binom{m-\sum_{1\leq i\leq t} w_i-d+r-2}{r-2}.\notag
\end{align}
Hence, the corollary holds.
\end{proof}

\section{Main result}

A family $\mathcal B\subseteq P(n,l)$ is said to be \emph{independent} if $I(\mathbf u,\mathbf v)=\varnothing$, i.e., $\vert I(\mathbf u,\mathbf v)\vert=0$, for all $\mathbf u,\mathbf v\in \mathcal B$ with $\mathbf u\neq \mathbf v$.

We shall need the following theorem  \cite[Theorem 2.3]{Ku_Wong4}.

\begin{thm}\label{thm_independent} Let $m,n$ be positive integers satisfying $m\leq n$, and let $q,r,s$ be positive integers with $r,s\geq 2$ and $n\geq (2s)^{2^{r-2}q}+1$. If $\mathcal A\subseteq P(m,r)$ such that $\vert\mathcal A\vert\geq n^{\frac{1}{q}}\binom{n+r-2}{r-2}$,
then there is an independent set $\mathcal B\subseteq \mathcal A$ with $\vert \mathcal B\vert\geq s+1$.
\end{thm}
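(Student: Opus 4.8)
The plan is to build the independent family $\mathcal B$ greedily, one element at a time, controlling at each stage how many elements of $\mathcal A$ are ``blocked'' by the choices already made. Recall that $\mathcal B$ is independent precisely when any two distinct $\mathbf u,\mathbf v\in\mathcal B$ satisfy $\mathbf u(i)\neq\mathbf v(i)$ for every $i\in[r]$; equivalently, a single element $\mathbf v$ fails to be independent from a fixed $\mathbf u$ exactly when $\mathbf v(i)=\mathbf u(i)$ for at least one coordinate $i$. So the whole argument rests on a per-element count of how many $\mathbf v$ can be ``spoiled'' by one chosen $\mathbf u$.

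The key counting step I would establish first is this: for a fixed $\mathbf u\in P(m,r)$, the number of $\mathbf v\in P(m,r)$ that agree with $\mathbf u$ in some coordinate is at most $r\binom{n+r-2}{r-2}$. For a fixed coordinate $i$ with value $c=\mathbf u(i)$, the number of $\mathbf v$ with $\mathbf v(i)=c$ equals the number of weak compositions of $m-c$ into the remaining $r-1$ parts, namely $\binom{m-c+r-2}{r-2}$, which is maximized at $c=0$ and hence at most $\binom{m+r-2}{r-2}\leq\binom{n+r-2}{r-2}$; here is where the hypothesis $m\leq n$ enters. Summing over the $r$ coordinates (a union bound) gives the claimed estimate.

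With this in hand the construction runs as follows. Suppose pairwise-independent $\mathbf u_1,\dots,\mathbf u_j\in\mathcal A$ have been selected with $j\leq s$. The elements of $\mathcal A$ that cannot be appended are exactly those agreeing with some $\mathbf u_i$ in some coordinate, of which there are at most $j\,r\binom{n+r-2}{r-2}\leq s\,r\binom{n+r-2}{r-2}$. Thus a further element can always be chosen provided $\vert\mathcal A\vert> s\,r\binom{n+r-2}{r-2}$, and iterating until $j=s$ produces an independent family of size $s+1$.

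It then remains only to check that the hypotheses force $\vert\mathcal A\vert> s\,r\binom{n+r-2}{r-2}$. Since $\vert\mathcal A\vert\geq n^{1/q}\binom{n+r-2}{r-2}$, it suffices to show $n^{1/q}>sr$, and from $n\geq(2s)^{2^{r-2}q}+1$ one gets $n^{1/q}>(2s)^{2^{r-2}}$, so everything reduces to the purely numerical inequality $(2s)^{2^{r-2}}\geq sr$ for integers $r,s\geq 2$. I expect this elementary verification to be the main thing to pin down; it follows from $2^{r-2}\geq r-1$ and $2^{r-1}\geq r$, which give $(2s)^{2^{r-2}}\geq(2s)^{r-1}\geq 2^{r-1}s\geq rs$. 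The case $r=2$ is the tight one (and is harmless, since there every pair of distinct elements of $P(m,r)$ is automatically independent). It is worth noting that the doubly-exponential threshold in the statement is far more generous than this greedy argument needs: the weaker bound $n>(sr)^q$ already suffices.
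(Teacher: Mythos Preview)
Your greedy argument is correct. Note, however, that the present paper does not actually prove this theorem: it is imported verbatim as \cite[Theorem~2.3]{Ku_Wong4} and used as a black box, so there is no proof here to compare against. What you have written is a self-contained replacement for that citation.

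The key counting step is sound: for fixed $\mathbf u\in P(m,r)$ and fixed coordinate $i$, the number of $\mathbf v\in P(m,r)$ with $\mathbf v(i)=\mathbf u(i)$ is $\binom{m-\mathbf u(i)+r-2}{r-2}\le\binom{m+r-2}{r-2}\le\binom{n+r-2}{r-2}$, and the union bound over $i\in[r]$ gives the $r\binom{n+r-2}{r-2}$ estimate. The greedy step then works exactly as you describe, and the numerical check $(2s)^{2^{r-2}}\geq sr$ via $2^{r-2}\geq r-1$ and $2^{r-1}\geq r$ is correct for all integers $r,s\geq 2$ (with equality at $r=2$, which is harmless since $n^{1/q}>(2s)^{2^{r-2}}$ is strict). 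Your final observation is also accurate: the doubly-exponential threshold $n\geq(2s)^{2^{r-2}q}+1$ is far more than this argument requires, and $n>(sr)^q$ would already suffice for the greedy construction; presumably the stronger hypothesis is an artifact of whatever method was used in \cite{Ku_Wong4}.
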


Let $\mathcal A\subseteq P(n,l)$. Let $x_1,x_2,\dots, x_t$ be distinct elements in $[l]$ with $x_1<x_2<\cdots<x_t$, and $y_1,y_2,\dots, y_t\in \mathbb N_0$ with $\sum_{j=1}^t y_j\leq n$. We set
\begin{align}
\mathcal A(x_1,x_2,\dots, x_t;y_1,y_2,\dots, y_t) & =\{ \mathbf u\in \mathcal A\ :\ \mathbf u(x_i)=y_i\ \textnormal{for all $i$}\},\notag\\
\mathcal A^*(x_1,x_2,\dots, x_t;y_1,y_2,\dots, y_t) & =\{ R(x_1,x_2,\dots, x_t;\mathbf u)\ :\ \mathbf u\in \mathcal A(x_1,x_2,\dots, x_t;y_1,y_2,\dots, y_t)\}.\notag
\end{align}
Note that
\begin{equation}
\mathcal A^*(x_1,x_2,\dots, x_t;y_1,y_2,\dots, y_t)\subseteq P\left (n-\sum_{j=1}^t y_j,l-t\right ),\notag
\end{equation}
and
\begin{equation}
\vert \mathcal A^*(x_1,x_2,\dots, x_t;y_1,y_2,\dots, y_t)\vert=\vert \mathcal A(x_1,x_2,\dots, x_t;y_1,y_2,\dots, y_t)\vert.\notag
\end{equation}

\begin{lm}\label{lm_main_independent} Let $\mathcal A\subseteq P(n,l)$ be $t$-intersecting and $l\geq t+3$. Let $x_1,x_2,\dots, x_{t+1}$ be distinct elements in $[l]$ with $x_1<x_2<\cdots<x_{t+1}$, and $y_1,y_2,\dots, y_{t+1}\in \mathbb N_0$ with $\sum_{j=1}^{t+1} y_j\leq n$. If $\mathcal A^*(x_1,\dots, x_{t+1};y_1,\dots, y_{t+1})$ has an independent set of size at least $l-t$, then
\begin{equation}
\mathcal A\subseteq \bigcup_{\substack{T\subseteq [t+1],\\ \vert T\vert=t}}\left \{ \mathbf u\in P(n,l)\ :\  \mathbf u(x_s)=y_s\ \textnormal{for all $s\in T$}\right\}.\notag
\end{equation}
\end{lm}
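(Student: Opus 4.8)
The plan is a short proof by contradiction, using the $t$-intersecting property together with a pigeonhole argument on the coordinates outside $\{x_1,\dots,x_{t+1}\}$. Write $X=\{x_1,\dots,x_{t+1}\}$ and $Z=[l]\setminus X$, so that $\vert Z\vert=l-t-1\geq 2$, and abbreviate the target pattern as $\mathbf y=(y_1,\dots,y_{t+1})$. Suppose, for contradiction, that some $\mathbf w\in\mathcal A$ agrees with $\mathbf y$ on \emph{at most} $t-1$ of the coordinates $x_1,\dots,x_{t+1}$, i.e.\ $\vert\{i\in[t+1]:\mathbf w(x_i)=y_i\}\vert\leq t-1$; the goal is to show this contradicts the existence of an independent set of size at least $l-t$ in $\mathcal A^*(x_1,\dots,x_{t+1};y_1,\dots,y_{t+1})$.

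First I would transfer the independent set back into $\mathcal A$. Since every element of $\mathcal A(x_1,\dots,x_{t+1};\mathbf y)$ has the same values $y_1,\dots,y_{t+1}$ at the coordinates of $X$, the restriction map $\mathbf u\mapsto R(x_1,\dots,x_{t+1};\mathbf u)$ is a bijection from $\mathcal A(x_1,\dots,x_{t+1};\mathbf y)$ onto $\mathcal A^*(x_1,\dots,x_{t+1};\mathbf y)$; let $\mathcal B\subseteq\mathcal A(x_1,\dots,x_{t+1};\mathbf y)$ be the preimage of a fixed independent set of size at least $l-t$, so $\vert\mathcal B\vert\geq l-t$. The crucial consequences are: (i) $\mathbf b(x_i)=y_i$ for all $i\in[t+1]$ and all $\mathbf b\in\mathcal B$; and (ii) any two distinct $\mathbf b,\mathbf b'\in\mathcal B$ disagree at \emph{every} coordinate of $Z$, because their images under $R(x_1,\dots,x_{t+1};\cdot)$ form an independent subfamily of $P\bigl(n-\sum_j y_j,\,l-t-1\bigr)$, whose coordinates one identifies with $Z$.

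Now apply the $t$-intersecting hypothesis. Because each $\mathbf b\in\mathcal B$ agrees with $\mathbf y$ on all of $X$ while $\mathbf w$ does not, $\mathbf w\notin\mathcal B$, so $\mathbf w\ne\mathbf b$ for every $\mathbf b\in\mathcal B$ and hence $\vert\{i\in[l]:\mathbf w(i)=\mathbf b(i)\}\vert\geq t$. Within $X$, the set of coordinates where $\mathbf w$ and $\mathbf b$ agree is contained in $\{i:\mathbf w(x_i)=y_i\}$, which has size at most $t-1$ by assumption; therefore $\mathbf w$ and $\mathbf b$ must agree at some coordinate $z(\mathbf b)\in Z$. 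The map $\mathbf b\mapsto z(\mathbf b)$ is injective on $\mathcal B$: if $z(\mathbf b)=z(\mathbf b')=z$ for distinct $\mathbf b,\mathbf b'$, then $\mathbf b(z)=\mathbf w(z)=\mathbf b'(z)$, contradicting (ii). Hence $l-t\leq\vert\mathcal B\vert\leq\vert Z\vert=l-t-1$, a contradiction.

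It follows that every $\mathbf w\in\mathcal A$ satisfies $\mathbf w(x_i)=y_i$ for at least $t$ indices $i\in[t+1]$; taking $T$ to be any $t$-subset of that index set places $\mathbf w$ in the claimed union, which proves the lemma. I do not expect any step to be genuinely hard here: the only point requiring care is the bookkeeping in the second paragraph — that the restriction map is a bijection on $\mathcal A(x_1,\dots,x_{t+1};\mathbf y)$, and that "independent" in the reduced space $P(n-\sum_j y_j,\,l-t-1)$ translates exactly to "pairwise disagreement throughout $Z$" back in $P(n,l)$ — after which the contradiction is the one-line pigeonhole inequality $l-t\leq\vert Z\vert=l-t-1$.
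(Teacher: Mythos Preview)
Your proof is correct and follows essentially the same approach as the paper's: both argue by contradiction that a hypothetical $\mathbf w\in\mathcal A$ disagreeing with $\mathbf y$ on at least two of the $x_i$ must share a coordinate in $Z=[l]\setminus\{x_1,\dots,x_{t+1}\}$ with each member of the independent set, and then use pairwise disagreement on $Z$ to force $l-t$ distinct elements into a set of size $l-t-1$. Your write-up is in fact slightly more careful than the paper's (you explicitly verify $\mathbf w\neq\mathbf b$ and phrase the pigeonhole step as injectivity of $\mathbf b\mapsto z(\mathbf b)$), but the argument is the same.
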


\begin{proof} Let
\begin{equation}
\mathcal B=\{ R(x_1,x_2,\dots, x_{t+1};\mathbf u_i)\ :\ i=1,2,\dots, l-t\},\notag
\end{equation}
be an independent set of size $l-t$ in $\mathcal A^*(x_1,\dots, x_{t+1};y_1,\dots, y_{t+1})$. Here each $\mathbf u_i$ is an element of $\mathcal A$ such that ${\bf u}_{i}(x_{j}) = y_{j}$ for all $1 \le j \le t$.

Let $\mathbf v\in\mathcal A$. Suppose
\begin{equation}
\mathbf v\notin \bigcup_{\substack{T\subseteq [t+1],\\ \vert T\vert=t}}\left \{ \mathbf u\in P(n,l)\ :\  \mathbf u(x_s)=y_s\ \textnormal{for all $s\in T$}\right\}.\notag
\end{equation}
Then there exist $j_1$ and $j_2$ with $1\leq j_1<j_2\leq t+1$ such that $\mathbf v(x_{j_1})\neq y_{j_1}$  and $\mathbf v(x_{j_2})\neq y_{j_2}$. Since $\mathcal A$ is $t$-intersecting,
\begin{equation}
\vert I(R(x_1,x_2,\dots, x_{t+1};\mathbf v), R(x_1,x_2,\dots, x_{t+1};\mathbf u_i))\vert\geq 1,\notag
\end{equation}
for $i=1,2,\dots, l-t$.

Set $\mathbf z=R(x_1,x_2,\dots, x_{t+1};\mathbf v)$ and $\mathbf w_i=R(x_1,x_2,\dots, x_{t+1};\mathbf u_i)$. Since $\mathcal B$ is independent,
\begin{equation}
I(\mathbf z,\mathbf w_i)\cap I(\mathbf z,\mathbf w_{i'})=\varnothing,\notag
\end{equation}
for $i\neq i'$. Therefore $\left\vert \bigcup_{i=1}^{l-t} I(\mathbf z,\mathbf w_i) \right\vert  =\sum_{i=1}^{l-t} \vert I(\mathbf z,\mathbf w_i)\vert\geq \sum_{i=1}^{l-t} 1=l-t$,
but on the other hand, $\bigcup_{i=1}^{l-t} I(\mathbf z,\mathbf w_i)\subseteq [l] \setminus \{x_{j}: j \in [t+1]\}$
which is of size at most $l-t-1$, a contradiction. Hence, \begin{equation}
\mathbf v\in \bigcup_{\substack{T\subseteq [t+1],\\ \vert T\vert=t}}\left \{ \mathbf u\in P(n,l)\ :\  \mathbf u(x_s)=y_s\ \textnormal{for all $s\in T$}\right\},\notag
\end{equation}
and the lemma follows.
\end{proof}

\begin{lm}\label{lm_pre_inequality} If ~$x_1,x_2,\dots, x_r$ are positive real numbers, then
\begin{equation}
\prod_{i=1}^r (1+x_i)-\prod_{i=1}^r(1-x_r)\geq 2\sum_{i=1}^r x_i.\notag
\end{equation}
\end{lm}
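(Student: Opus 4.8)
The plan is to prove Lemma~\ref{lm_pre_inequality} by induction on $r$. First I would establish the base case $r=1$: here the claim is $(1+x_1)-(1-x_1)\geq 2x_1$, which holds with equality, so there is nothing to do. (Note that the statement as written has a typo, $\prod_{i=1}^r(1-x_r)$ should read $\prod_{i=1}^r(1-x_i)$; I would silently use the intended product $\prod_{i=1}^r(1-x_i)$.)

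For the inductive step, assume the inequality holds for $r$ and consider $r+1$ positive reals $x_1,\dots,x_{r+1}$. Write $A=\prod_{i=1}^r(1+x_i)$ and $B=\prod_{i=1}^r(1-x_i)$, so that by the inductive hypothesis $A-B\geq 2\sum_{i=1}^r x_i$. Then
\begin{align}
\prod_{i=1}^{r+1}(1+x_i)-\prod_{i=1}^{r+1}(1-x_i) &= A(1+x_{r+1})-B(1-x_{r+1})\notag\\
&= (A-B)+x_{r+1}(A+B).\notag
\end{align}
Now I would bound the two terms separately: $A-B\geq 2\sum_{i=1}^r x_i$ by induction, and $A+B\geq 2$ because $A=\prod(1+x_i)\geq 1$ while $B=\prod(1-x_i)$ — and even if $B$ is negative or zero, one has $A\geq 1\geq -B$ whenever $B\leq 0$, whereas if $B>0$ then $AB=\prod(1-x_i^2)\leq 1$ forces $A+B\geq 2\sqrt{AB}$... which is the wrong direction, so instead I would argue directly that $A+B=\prod(1+x_i)+\prod(1-x_i)\geq 2$ by expanding both products and noting that the odd-degree elementary symmetric terms cancel while the even-degree ones (including the constant $1$ from each, summing to $2$) contribute nonnegatively after pairing. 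Combining, $(A-B)+x_{r+1}(A+B)\geq 2\sum_{i=1}^r x_i+2x_{r+1}=2\sum_{i=1}^{r+1}x_i$, completing the induction.

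The only genuinely delicate point is verifying $A+B\geq 2$, i.e.\ $\prod_{i=1}^r(1+x_i)+\prod_{i=1}^r(1-x_i)\geq 2$ for positive $x_i$; the cleanest way is the expansion argument: $\prod(1+x_i)+\prod(1-x_i)=2\sum_{S\subseteq[r],\,|S|\text{ even}}\prod_{i\in S}x_i\geq 2$, since the empty set contributes $2\cdot 1=2$ and every other term is positive. With that in hand the inductive step is immediate. I expect this expansion identity to be the main (minor) obstacle, and I would state it as a one-line sub-claim before invoking it.
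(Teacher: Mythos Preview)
Your proof is correct, but it takes a slightly longer route than the paper's. The paper dispenses with induction entirely: it simply expands the difference $\prod_{i=1}^r(1+x_i)-\prod_{i=1}^r(1-x_i)$ over subsets $S\subseteq[r]$, observes that terms with $|S|$ even cancel while terms with $|S|$ odd double, so the difference equals $2\sum_{|S|\ \mathrm{odd}}\prod_{s\in S}x_s$, and then keeps only the singleton subsets to get $2\sum_{i=1}^r x_i$.

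Notice that your crucial sub-claim, $A+B=\prod(1+x_i)+\prod(1-x_i)=2\sum_{|S|\ \mathrm{even}}\prod_{s\in S}x_s\geq 2$, is exactly the complementary expansion identity. Had you applied that same expansion idea directly to $A-B$ rather than to $A+B$, you would have arrived at the paper's one-line proof and the induction scaffolding would have been unnecessary. So your argument is valid, but the insight that actually does the work (the parity expansion) is already strong enough to prove the lemma outright; the inductive wrapper and the digressions about AM--GM and the sign of $B$ can be dropped.
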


\begin{proof} Let $S\subseteq [r]$. If $\vert S\vert$ is even, then $\prod_{s\in S} x_s$ appears in the expansions of $\prod_{i=1}^r (1+x_i)$ and $\prod_{i=1}^r(1-x_r)$. So, $\prod_{s\in S} x_s$ does not appear in $\prod_{i=1}^r (1+x_i)-\prod_{i=1}^r(1-x_r)$. If $\vert S\vert$ is odd, then $\prod_{s\in S} x_s$ and $-\prod_{s\in S} x_s$ appear in the expansions of $\prod_{i=1}^r (1+x_i)$ and $\prod_{i=1}^r(1-x_r)$, respectively. So, $2\prod_{s\in S} x_s$ appears in $\prod_{i=1}^r (1+x_i)-\prod_{i=1}^r(1-x_r)$. The lemma follows by just considering all subsets  $S$ of $[r]$ with $\vert S\vert=1$.
\end{proof}

\begin{lm}\label{lm_inequality} Let $m,n$ be positive integers with $n\geq m+1$. Then
\begin{equation}
{n+m \choose m}-{n-1 \choose m}\geq \frac{(m+1)n^{m-1}}{(m-1)!}.\notag
\end{equation}
\end{lm}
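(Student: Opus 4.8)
The plan is to express both binomial coefficients as products of $m$ consecutive integers, pull out a common factor of $n^{m}$, and then invoke Lemma~\ref{lm_pre_inequality}. First I would write
\begin{equation}
\binom{n+m}{m}=\frac{1}{m!}\prod_{i=1}^{m}(n+i),\qquad \binom{n-1}{m}=\frac{1}{m!}\prod_{i=1}^{m}(n-i);\notag
\end{equation}
here the hypothesis $n\geq m+1$ guarantees that each factor $n-i$ with $1\leq i\leq m$ is a positive integer, so the second expression really does equal $\binom{n-1}{m}$ and every factor occurring below is positive. Dividing each factor by $n$ then gives
\begin{equation}
\binom{n+m}{m}-\binom{n-1}{m}=\frac{n^{m}}{m!}\left(\prod_{i=1}^{m}\left(1+\frac{i}{n}\right)-\prod_{i=1}^{m}\left(1-\frac{i}{n}\right)\right).\notag
\end{equation}

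Next I would apply Lemma~\ref{lm_pre_inequality} with $r=m$ and $x_{i}=i/n$ (which are positive reals), obtaining
\begin{equation}
\prod_{i=1}^{m}\left(1+\frac{i}{n}\right)-\prod_{i=1}^{m}\left(1-\frac{i}{n}\right)\geq 2\sum_{i=1}^{m}\frac{i}{n}=\frac{m(m+1)}{n}.\notag
\end{equation}
Substituting this back yields
\begin{equation}
\binom{n+m}{m}-\binom{n-1}{m}\geq \frac{n^{m}}{m!}\cdot\frac{m(m+1)}{n}=\frac{(m+1)n^{m-1}}{(m-1)!},\notag
\end{equation}
which is exactly the claimed bound.

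There is essentially no obstacle here: once the two binomials are written as products of $m$ consecutive integers, the estimate reduces, after extracting $n^{m}$, to precisely the elementary inequality already recorded in Lemma~\ref{lm_pre_inequality}. The only point that needs a word of care is the positivity of the factors $n-i$ for $1\leq i\leq m$ — needed both to justify the product manipulation and to apply Lemma~\ref{lm_pre_inequality} with positive $x_{i}$ — and this is exactly what the hypothesis $n\geq m+1$ supplies. The boundary case $m=1$ can also be verified directly, where both sides equal $2$.
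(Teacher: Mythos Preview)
Your proof is correct and follows essentially the same approach as the paper: factor each binomial as $\frac{n^{m}}{m!}$ times a product of $(1\pm i/n)$'s and apply Lemma~\ref{lm_pre_inequality} with $x_i=i/n$, then evaluate $2\sum_{i=1}^{m} i/n=m(m+1)/n$. The only addition is your explicit remark about the hypothesis $n\geq m+1$ and the $m=1$ check, which the paper omits.
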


\begin{proof} Note that
\begin{equation}
{n+m \choose m}=\frac{n^{m}}{m!}\prod_{i=1}^{m}\left(1+\frac{i}{n} \right),\notag
\end{equation}
and
\begin{equation}
{n-1 \choose m}=\frac{n^{m}}{m!}\prod_{i=1}^{m}\left(1-\frac{i}{n} \right),\notag
\end{equation}
It then follows from Lemma \ref{lm_pre_inequality} that
\begin{align}
{n+m\choose m}-{n-1 \choose m} & \geq \frac{n^{m}}{m!}\left(2\sum_{i=1}^{m} \frac{i}{n}\right)\notag\\
& = \frac{(m+1)n^{m-1}}{(m-1)!}.\notag
\end{align}
\end{proof}

\begin{lm}\label{lm_upper_bound} Let $m,n$ be positive integers with $n\geq m$. Then
\begin{equation}
\frac{n^{m}}{m!}<{n+m \choose m}<\frac{n^{m}}{m!}\left(1+\frac{2^mm}{n} \right).\notag
\end{equation}
\end{lm}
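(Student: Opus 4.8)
The plan is to begin from the elementary factorization
\begin{equation}
\binom{n+m}{m}=\frac{(n+1)(n+2)\cdots(n+m)}{m!}=\frac{n^{m}}{m!}\prod_{i=1}^{m}\left(1+\frac{i}{n}\right),\notag
\end{equation}
which is exactly the device already used in the proof of Lemma~\ref{lm_inequality}. The left inequality is then immediate: each factor $1+i/n$ is strictly greater than $1$, so $\prod_{i=1}^{m}(1+i/n)>1$ and hence $\binom{n+m}{m}>n^{m}/m!$.

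For the right inequality I would invoke the hypothesis $n\geq m$ to bound the product termwise. Since $i\leq m$ for every $i\in[m]$, we have $\prod_{i=1}^{m}(1+i/n)\leq(1+m/n)^{m}$, and expanding by the binomial theorem gives $(1+m/n)^{m}=\sum_{k=0}^{m}\binom{m}{k}(m/n)^{k}$. Now $n\geq m$ forces $m/n\leq 1$, so $(m/n)^{k}\leq m/n$ for every $k\geq 1$; summing and using $\sum_{k=1}^{m}\binom{m}{k}=2^{m}-1$ yields
\begin{equation}
\prod_{i=1}^{m}\left(1+\frac{i}{n}\right)\leq 1+\frac{m}{n}\left(2^{m}-1\right)<1+\frac{2^{m}m}{n}.\notag
\end{equation}
Multiplying through by $n^{m}/m!$ then gives the claimed upper bound.

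There is essentially no obstacle here; the only point requiring a little care is to choose a crude enough estimate that still lands strictly below $1+2^{m}m/n$, and replacing every factor $i$ by $m$ in the product before collapsing the powers of $m/n$ via $n\geq m$ does precisely that. (One could instead expand $\prod_{i=1}^{m}(1+i/n)$ directly into elementary symmetric functions of $1,\dots,m$ and bound $e_{k}(1,\dots,m)\leq\binom{m}{k}m^{k}$, arriving at the same estimate, but the $(1+m/n)^{m}$ shortcut is shorter.)
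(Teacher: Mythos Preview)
Your proof is correct and follows essentially the same route as the paper: the same factorization $\binom{n+m}{m}=\frac{n^{m}}{m!}\prod_{i=1}^{m}(1+i/n)$, the same termwise bound $\prod(1+i/n)\le(1+m/n)^{m}$, and the same binomial expansion with $(m/n)^{k}\le m/n$ to collapse the sum to $1+(2^{m}-1)m/n<1+2^{m}m/n$. There is no meaningful difference between your argument and the paper's.
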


\begin{proof} Note that
\begin{equation}
{n+m\choose m} =\frac{n^{m}}{m!}\prod_{i=1}^{m}\left(1+\frac{i}{n} \right)>\frac{n^{m}}{m!}.\notag
\end{equation}
For the second inequality, it is sufficient to show that
\begin{equation}
\prod_{i=1}^{m}\left(1+\frac{i}{n} \right)<\left(1+\frac{2^mm}{n} \right).\notag
\end{equation}
Now,
\begin{align}
\prod_{i=1}^{m}\left(1+\frac{i}{n} \right)& \leq  \left(1+\frac{m}{n} \right)^m\notag\\
& = \left(1+\sum_{i=1}^{m}  {m\choose i}    \left(\frac{m}{n}\right)^i \right)\notag\\
& \leq\left(1+\frac{m}{n}\left(\sum_{i=1}^{m}  {m\choose i} \right) \right)\notag\\
& < \left(1+\frac{2^mm}{n} \right).\notag
\end{align}
\end{proof}

\begin{lm}\label{lm_upper_bound2} Let $m$ be a positive integer and $f,g$ be positive real numbers. There exists a constant $n_0=n_0(f,g,m)$ depending on $f,g$ and $m$ such that if $n\geq n_0$, then
\begin{equation}
f{n+m \choose m}+g\sqrt n{n+m-1 \choose m-1}<\frac{n^{m}(f+1)}{m!}.\notag
\end{equation}
\end{lm}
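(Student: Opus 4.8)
The plan is to reduce both binomial coefficients to explicit polynomials in $n$ by invoking Lemma \ref{lm_upper_bound}, and then to compare growth rates: the right-hand side is of order $n^m$, whereas after cancelling a common main term the left-hand side is only of order $n^{m-1/2}$, so every sufficiently large $n$ works.

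Concretely, I would first apply Lemma \ref{lm_upper_bound} to bound
\begin{equation}
f\binom{n+m}{m}<\frac{fn^m}{m!}\left(1+\frac{2^mm}{n}\right)=\frac{fn^m}{m!}+\frac{f\,2^mm}{m!}\,n^{m-1},\notag
\end{equation}
and, applying the same lemma with $m$ replaced by $m-1$ when $m\geq 2$ (the case $m=1$ being immediate, since then $\binom{n+m-1}{m-1}=1=\tfrac{n^{m-1}}{(m-1)!}$),
\begin{equation}
g\sqrt n\binom{n+m-1}{m-1}\leq \frac{g}{(m-1)!}\,n^{m-1/2}+\frac{g\,2^{m-1}(m-1)}{(m-1)!}\,n^{m-3/2}.\notag
\end{equation}
Adding these two estimates and subtracting $\tfrac{fn^m}{m!}$ from both sides of the inequality to be proved, it suffices to establish
\begin{equation}
\frac{f\,2^mm}{m!}\,n^{m-1}+\frac{g}{(m-1)!}\,n^{m-1/2}+\frac{g\,2^{m-1}(m-1)}{(m-1)!}\,n^{m-3/2}<\frac{n^m}{m!}.\notag
\end{equation}
Dividing through by $n^{m-1}$, this is of the form $A+B\sqrt n+Cn^{-1/2}<n/m!$, where $A,B,C$ are positive constants depending only on $f,g,m$. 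Since the right-hand side grows linearly in $n$ while the left-hand side grows only like $\sqrt n$, there is a threshold $n_0=n_0(f,g,m)$ past which the inequality holds; one may, for instance, take any $n_0\geq 1$ with $n_0/m!>A+(B+C)\sqrt{n_0}$ and check that the difference $n/m!-(A+B\sqrt n+Cn^{-1/2})$ is eventually increasing in $n$, so the bound persists for all $n\geq n_0$.

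This argument is entirely routine; the only point that needs a moment's care is the bookkeeping of which power of $n$ each summand contributes. The essential observation is that the extra factor $\sqrt n$ in front of $\binom{n+m-1}{m-1}$ keeps that whole term at order $n^{m-1/2}$, strictly below the order $n^m$ of $\tfrac{n^m(f+1)}{m!}$, so that the ``$+1$'' slack built into the coefficient $f+1$ is more than enough to absorb everything for large $n$. I do not anticipate any genuine obstacle beyond choosing the constant $n_0$ explicitly if one wishes an effective bound.
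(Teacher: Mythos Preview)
Your proof is correct and follows essentially the same route as the paper's: both arguments apply Lemma~\ref{lm_upper_bound} to each binomial coefficient and then compare the resulting polynomial-in-$n$ bounds, observing that the slack of $\tfrac{n^m}{m!}$ on the right dominates the $O(n^{m-1/2})$ leftover on the left. The only cosmetic difference is that the paper keeps everything factored and exhibits an explicit threshold $n_0=\max\bigl(m,\,2^{m-1}(m-1),\,(2^m/g)^2,\,(3fgm)^2\bigr)$, whereas you expand and appeal to the eventual dominance of $n$ over $\sqrt n$.
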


\begin{proof} Suppose $n\geq \max\left( m, 2^{m-1}(m-1), \left(\frac{2^m}{g}\right)^2, (3fgm)^2  \right)$. By Lemma \ref{lm_upper_bound},
\begin{align}
&f{n+m \choose m}+g\sqrt n{n+m-1 \choose m-1}\notag\\
&<f\frac{n^{m}}{m!}\left(1+\frac{2^mm}{n} \right)+g\sqrt n\frac{n^{m-1}}{(m-1)!}\left(1+\frac{2^{m-1}(m-1)}{n} \right)\notag\\
&\leq f\frac{n^{m}}{m!}\left(1+\frac{2^mm}{n} \right)+\frac{2gn^{m-\frac{1}{2}}}{(m-1)!}\notag\\
&=f\frac{n^{m}}{m!}\left(1+\frac{2^mm}{n} +\frac{2gm}{\sqrt n}\right)\notag\\
&\leq f\frac{n^{m}}{m!}\left(1+\frac{3gm}{\sqrt n}\right)\notag\\
&=\frac{n^{m}}{m!}\left(f+\frac{3fgm}{\sqrt n}\right)\notag\\
&<\frac{n^{m}(f+1)}{m!}.\notag
\end{align}
\end{proof}

\begin{proof}[Proof of Theorem \ref{thm_main}] We may assume that $\mathcal A$ is maximal $t$-intersecting in the sense that
$\mathcal A\cup \{\mathbf u\}$ is not $t$-intersecting for any $\mathbf u\in P(n,l)\setminus \mathcal A$.

Suppose
\begin{align}
\vert \mathcal A\vert< \frac{n^{l-t-2}(t+3)}{(l-t-2)!}.\notag
\end{align}
By Lemma \ref{lm_inequality},
\begin{equation}
{n+l-t-1 \choose l-t-1}-{n-1 \choose l-t-1}\geq \frac{(l-t)n^{l-t-2}}{(l-t-2)!}.\notag
\end{equation}
Since $l\geq 2t+3$, $\vert \mathcal A\vert< {n+l-t-1 \choose l-t-1}-{n-1 \choose l-t-1}$. So, we may assume that
\begin{align}\label{eq_1}
\vert \mathcal A\vert\geq  \frac{n^{l-t-2}(t+3)}{(l-t-2)!}.\tag{1}
\end{align}

Let $\mathbf w=(w_1,w_2,\dots, w_l)\in\mathcal A$ be fixed. Then
\begin{equation}
\mathcal A=\bigcup_{\substack{\{x_1,x_2,\dots,x_t\}\subseteq [l],\\ x_1<x_2<\cdots<x_t}} \mathcal A(x_1,x_2,\dots, x_t;w_{x_1},w_{x_2},\dots, w_{x_t}).\notag
\end{equation}
Let $\{x_1',x_2',\dots,x_t'\}\subseteq [l]$ with $x_1'<x_2'<\cdots<x_t'$ be fixed and
\begin{equation}
\mathcal C=\mathcal A(x_1',x_2',\dots, x_t';w_{x_1'},w_{x_2'},\dots, w_{x_t'}).\notag
\end{equation}
We may assume that $\vert \mathcal C\vert$ is maximum in the sense that
\begin{equation}
\vert \mathcal A(x_1,x_2,\dots, x_t;w_{x_1},w_{x_2},\dots, w_{x_t})\vert\leq \vert \mathcal C\vert,\notag
\end{equation}
for all $\{x_1,x_2,\dots,x_t\}\subseteq [l]$ with $x_1<x_2<\cdots<x_t$.

By relabelling if necessary, we may assume that $x_i'=i$ for all $i$, i.e.,
\begin{equation}
\mathcal C=\mathcal A(1,2,\dots, t;w_{1},w_{2},\dots, w_{t}).\notag
\end{equation}
Let $P=P(n,l)$. Since $\mathcal A$ is non-trivially $t$-intersecting,
\begin{equation}
\mathcal A\nsubseteq P(1,2,\dots, t;w_{1},w_{2},\dots, w_{t}).\notag
\end{equation}
Therefore there is a $\mathbf y=(y_1,y_2,\dots, y_{l})\in \mathcal A$ with $y_i\neq w_i$ for some $1\leq i\leq t$. Since $\mathcal C\cup \{\mathbf y\}$ is $t$-intersecting,
\begin{equation}
\mathcal C=\bigcup_{t+1\leq s\leq l} \mathcal A(1,2,\dots, t,s;w_{1},w_{2},\dots, w_{t},y_s).\notag
\end{equation}

We shall distinguish 5 cases.

\vskip 0.5cm
\noindent
{\bf Case 1}. Suppose that
\begin{equation}
 \vert \mathcal A(1,2,\dots, t,s;w_{1},w_{2},\dots, w_{t},y_s)\vert\leq n^{\frac{1}{2}}\binom{n+l-t-3}{l-t-3},\notag
\end{equation}
for all $t+1\leq s\leq l$. Then
\begin{align}
\vert \mathcal C\vert& \leq (l-t) n^{\frac{1}{2}}\binom{n+l-t-3}{l-t-3}.\notag
\end{align}
By the  maximality of $\vert \mathcal C\vert$,
\begin{align}
\vert \mathcal A\vert& \leq \binom{l}{t}(l-t) n^{\frac{1}{2}}\binom{n+l-t-3}{l-t-3}.\notag
\end{align}
By Lemma \ref{lm_upper_bound2},
\begin{align}
\vert \mathcal A\vert< \frac{n^{l-t-2}}{(l-t-2)!},\notag
\end{align}
contradicting equation (\ref{eq_1}).  Hence,  Case 1 cannot happen.

Now, by relabelling if necessary we may assume that there is a $k\in \{t+1,t+2,\dots, l\}$ such that
\begin{equation}
\vert \mathcal A(1,2,\dots, t,s;w_{1},w_{2},\dots, w_{t},y_s)\vert\geq n^{\frac{1}{2}}\binom{n+l-t-3}{l-t-3},\notag
\end{equation}
for all $t+1\leq s\leq k$ and
\begin{equation}
\vert \mathcal A(1,2,\dots, t,s;w_{1},w_{2},\dots, w_{t},y_s)\vert\leq n^{\frac{1}{2}}\binom{n+l-t-3}{l-t-3},\notag
\end{equation}
for all $k+1\leq  s\leq l$.

\vskip 0.5cm
\noindent
{\bf Case 2}. Suppose $k= t+1$. Since
\begin{equation}
\vert \mathcal A^*(1,2,\dots, t,t+1;w_{1},w_{2},\dots, w_{t},y_{t+1})\vert=\vert \mathcal A(1,2,\dots, t,t+1;w_{1},w_{2},\dots, w_{t},y_{t+1})\vert,\notag
\end{equation}
and
\begin{equation}
\mathcal A^*(1,2,\dots, t,t+1;w_{1},w_{2},\dots, w_{t},y_{t+1})\subseteq P\left (n-y_{t+1}-\sum_{j=1}^t w_{j},l-t-1\right ),\notag
\end{equation}
by Theorem \ref{thm_independent}, $\mathcal A^*(1,2,\dots, t,t+1;w_{1},w_{2},\dots, w_{t},y_{t+1})$ has an independent set of size at least $l-t$, if
$n\geq (2(l-t-1))^{2^{l-t-2}}+1$. Then it follows from Lemma \ref{lm_main_independent} that
\begin{equation}
\mathcal A=\mathcal C\cup \bigcup_{1\leq j\leq t}\mathcal A(1,\dots,j-1,j+1,\dots ,t,t+1;w_{1},\dots,w_{j-1},w_{j+1},\dots, w_{t},y_{t+1}).\notag
\end{equation}
By the choice of $\vert \mathcal C\vert$, $\vert\mathcal A\vert\leq (t+1)\vert \mathcal C\vert$.

Now,
\begin{align}
\vert \mathcal A(1,2,\dots, t,t+1;w_{1},w_{2},\dots, w_{t},y_{t+1})\vert &\leq \binom{n-y_{t+1}-(\sum_{j=1}^t w_j)+l-t-2}{l-t-2}\notag\\
&\leq \binom{n+l-t-2}{l-t-2}.\notag
\end{align}
Therefore
\begin{align}
\vert \mathcal C\vert &\leq \binom{n+l-t-2}{l-t-2}+(l-t-1)n^{\frac{1}{2}}\binom{n+l-t-3}{l-t-3},\notag
\end{align}
and
\begin{align}
\vert \mathcal A\vert &\leq (t+1)\binom{n+l-t-2}{l-t-2}+(t+1)(l-t-1)n^{\frac{1}{2}}\binom{n+l-t-3}{l-t-3}.\notag
\end{align}
By Lemma \ref{lm_upper_bound2},
\begin{align}
\vert \mathcal A\vert &<\frac{n^{l-t-2}(t+2)}{(l-t-2)!},\notag
\end{align}
contradicting equation (\ref{eq_1}).  Hence,  Case 2 cannot happen.

\vskip 0.5cm
\noindent
{\bf Case 3}. Suppose $k= t+2$. Again,
by Theorem \ref{thm_independent},  $\mathcal A^*(1,2,\dots, t,s;w_{1},w_{2},\dots, w_{t},y_{s})$ has an independent set of size at least $l-t$ for $s\in\{t+1,t+2\}$, if
$n\geq (2(l-t-1))^{2^{l-t-2}}+1$. For each $j\in [t]$, let
\begin{align}
\mathcal Q_j=\{ \mathbf u\in \mathcal A\ & :\ \mathbf u(i)=w_i\ \textnormal{for all}\ i\in [t]\setminus \{j\}, \mathbf u(j)\neq w_j,\notag\\
& \hskip 1cm \mathbf u(t+1)=y_{t+1}\ \textnormal{and}\ \mathbf u(t+2)=y_{t+2}\}.\notag
\end{align}
It follows from Lemma \ref{lm_main_independent} that
\begin{equation}
\mathcal A=\mathcal C\cup \bigcup_{1\leq j\leq t}\mathcal Q_j.\notag
\end{equation}
Let $n'_j=n-y_{t+1}-y_{t+2}-\left(\sum_{1\leq i\leq t,i\neq j}w_i\right)$. If $n'_j<0$, then $\vert \mathcal Q_j\vert=0$. If $n'_j\geq 0$, then
\begin{align}
\vert \mathcal Q_j\vert &\leq  \sum_{\substack{0\leq d\leq n_j',\\ d\neq w_j}} \binom{n'_j-d+l-t-3}{l-t-3}\notag\\
&\leq \sum_{0\leq d\leq n_j'} \binom{n'_j-d+l-t-3}{l-t-3}\notag\\
& =\binom{n'_j+l-t-2}{l-t-2}\notag\\
&\leq \binom{n+l-t-2}{l-t-2}.\notag
\end{align}
Therefore $\vert \mathcal A\vert\leq \vert \mathcal C\vert+t\binom{n+l-t-2}{l-t-2}$.

Now, for each $s\in\{t+1,t+2\}$,
\begin{align}
\vert \mathcal A(1,2,\dots, t,s;w_{1},w_{2},\dots, w_{t},y_{s})\vert &\leq \binom{n-y_{s}-(\sum_{j=1}^t w_j)+l-t-2}{l-t-2}\notag\\
&\leq \binom{n+l-t-2}{l-t-2}.\notag
\end{align}
Therefore
\begin{align}
\vert \mathcal C\vert &\leq 2\binom{n+l-t-2}{l-t-2}+(l-t-2)n^{\frac{1}{2}}\binom{n+l-t-3}{l-t-3},\notag
\end{align}
and
\begin{align}
\vert \mathcal A\vert &\leq (t+2)\binom{n+l-t-2}{l-t-2}+(l-t-2)n^{\frac{1}{2}}\binom{n+l-t-3}{l-t-3}.\notag
\end{align}
By Lemma \ref{lm_upper_bound2},
\begin{align}
\vert \mathcal A\vert &<\frac{n^{l-t-2}(t+3)}{(l-t-2)!},\notag
\end{align}
contradicting equation (\ref{eq_1}).  Hence,  Case 3 cannot happen.

We may assume that $t+3\leq k\leq  l$.
In particular,
\begin{equation}
\vert \mathcal A(1,2,\dots, t,t+1;w_{1},w_{2},\dots, w_{t},y_{t+1})\vert\geq  n^{\frac{1}{2}}\binom{n+l-t-3}{l-t-3}.\notag
\end{equation}
 On the other hand,
\begin{align}
\vert \mathcal A(1,2,\dots, t,t+1;w_{1},w_{2},\dots, w_{t},y_{t+1})\vert &\leq \binom{n-y_{t+1}-(\sum_{j=1}^t w_j)+l-t-2}{l-t-2}\notag\\
&\leq \binom{n-(\sum_{j=1}^t w_j)+l-t-2}{l-t-2}\notag.\notag
\end{align}

If $n-\sum_{j=1}^t w_j<l-t$, then $n^{\frac{1}{2}}\binom{n+l-t-3}{l-t-3}\leq \binom{n-(\sum_{j=1}^t w_j)+l-t-2}{l-t-2}<\binom{2l-2t-2}{l-t-2}$, which is impossible for large $n$. So, we may assume that
\begin{equation}\label{eq2}
n-\sum_{j=1}^t w_j\geq l-t.\tag{2}
\end{equation}

\vskip 0.5cm
\noindent
{\bf Case 4}. Suppose $t+3\leq k\leq  l-1$. Again,
by Theorem \ref{thm_independent},  $\mathcal A^*(1,2,\dots, t,s;w_{1},w_{2},\dots, w_{t},y_{s})$ has an independent set of size at least $l-t$ for $s\in\{t+1,t+2,t+3\}$, if
$n\geq (2(l-t-1))^{2^{l-t-2}}+1$. For each $j\in [t]$, let
\begin{align}
\mathcal Q_j=\{ \mathbf u\in \mathcal A\ & :\ \mathbf u(i)=w_i\ \textnormal{for all}\ i\in [t]\setminus \{j\}, \mathbf u(j)\neq w_j,\notag\\
& \hskip 1cm \mathbf u(t+1)=y_{t+1}, \mathbf u(t+2)=y_{t+2},\ \textnormal{and}\ \mathbf u(t+3)=y_{t+3}\}.\notag
\end{align}
It follows from Lemma \ref{lm_main_independent} that
\begin{equation}
\mathcal A=\mathcal C\cup \bigcup_{1\leq j\leq t}\mathcal Q_j.\notag
\end{equation}
Let $n'_j=n-y_{t+1}-y_{t+2}-y_{t+3}-\left(\sum_{1\leq i\leq t,i\neq j}w_i\right)$. If $n'_j<0$, then $\vert \mathcal Q_j\vert=0$. If $n'_j\geq 0$, then
\begin{align}
\vert \mathcal Q_j\vert &\leq  \sum_{\substack{0\leq d\leq n_j',\\ d\neq w_j}} \binom{n'_j-d+l-t-4}{l-t-4}\notag\\
&\leq \sum_{0\leq d\leq n_j'} \binom{n'_j-d+l-t-4}{l-t-4}\notag\\
& =\binom{n'_j+l-t-3}{l-t-3}\notag\\
&\leq \binom{n+l-t-3}{l-t-3}.\notag
\end{align}
Therefore $\vert \mathcal A\vert\leq \vert \mathcal C\vert+t\binom{n+l-t-3}{l-t-3}$.

Let
\begin{align}
\mathcal D  =\{ \mathbf u\in P(n,l)\ :\ \mathbf u(i)= w_i\ \textnormal{for all $i\in [t]$ and}\ \mathbf u(i)= y_i\ \textnormal{for some $t+1\leq i\leq l-1$}\}.\notag
\end{align}
By equation (\ref{eq2}) and Corollary \ref{cor_intersect_l},
\begin{align}
\vert \mathcal D\vert &\leq \sum_{0\leq d\leq l-t-2}\binom{n-\sum_{1\leq i\leq t} w_i-d+l-t-2}{l-t-2}\notag\\
&\leq \sum_{0\leq d\leq l-t-2}\binom{n-d+l-t-2}{l-t-2}.\notag
\end{align}
Since
\begin{equation}
\bigcup_{t+1\leq s\leq l-1} \mathcal A(1,2,\dots, t,s;w_{1},w_{2},\dots, w_{t},y_s)\subseteq \mathcal D,\notag
\end{equation}

\begin{equation}
\vert \mathcal C\vert\leq \sum_{0\leq d\leq l-t-2}\binom{n-d+l-t-2}{l-t-2}+n^{\frac{1}{2}}\binom{n+l-t-3}{l-t-3},\notag
\end{equation}
and
\begin{align}
\vert \mathcal A\vert &\leq \sum_{0\leq d\leq l-t-2}\binom{n-d+l-t-2}{l-t-2}+(n^{\frac{1}{2}}+t)\binom{n+l-t-3}{l-t-3}\notag\\
 &\leq \sum_{0\leq d\leq l-t-2}\binom{n-d+l-t-2}{l-t-2}+2n^{\frac{1}{2}}\binom{n+l-t-3}{l-t-3}.\notag
\end{align}
 Now,
\begin{equation}
{n+l-t-1 \choose l-t-1}-{n-1 \choose l-t-1}=\sum_{0\leq d\leq l-t-1}\binom{n-d+l-t-2}{l-t-2}.\notag
\end{equation}
Therefore $\vert \mathcal A\vert<{n+l-t-1 \choose l-t-1}-{n-1 \choose l-t-1}$ if and only if
\begin{equation}
2n^{\frac{1}{2}}\binom{n+l-t-3}{l-t-3}<\binom{n-1}{l-t-2}.\notag
\end{equation}
By Lemma \ref{lm_upper_bound},
\begin{align}
2n^{\frac{1}{2}}\binom{n+l-t-3}{l-t-3}&<\frac{4n^{l-t-\frac{5}{2}}}{(l-t-3)!}\notag\\
&<\frac{n^{l-t-2}}{(l-t-2)!}\left( 1-\frac{l-t-1}{n} \right)^{l-t-2}\notag\\
&<\binom{n-1}{l-t-2}.\notag
\end{align}
Hence, $\vert \mathcal A\vert<{n+l-t-1 \choose l-t-1}-{n-1 \choose l-t-1}$ and Case 4 is done.

\vskip 0.5cm
\noindent
{\bf Case 5}. Suppose $k=l$. Again,
by Theorem \ref{thm_independent},  $\mathcal A^*(1,2,\dots, t,s;w_{1},w_{2},\dots, w_{t},y_{s})$ has an independent set of size at least $l-t$ for $s\in [l]\setminus [t]$, if
$n\geq (2(l-t-1))^{2^{l-t-2}}+1$. For each $j\in [t]$, let
\begin{align}
\mathcal Q_j=\{ \mathbf u\in \mathcal A\ & :\ \mathbf u(i)=w_i\ \textnormal{for all}\ i\in [t]\setminus \{j\}, \mathbf u(j)\neq w_j,\notag\\
& \hskip 1cm \textnormal{and}\ \mathbf u(i)=y_{i}\ \textnormal{for all}\ i\in [l]\setminus [t]\}.\notag
\end{align}
It follows from Lemma \ref{lm_main_independent} that
\begin{equation}
\mathcal A=\mathcal C\cup \bigcup_{1\leq j\leq t}\mathcal Q_j.\notag
\end{equation}
Let $n'_j=n-\sum_{t+1\leq i\leq l} y_{i}-\left(\sum_{1\leq i\leq t,i\neq j}w_i\right)$. If $n'_j<0$, then $\vert \mathcal Q_j\vert=0$. If $n'_j\geq 0$, then
$\vert \mathcal Q_j\vert=1$. Therefore $\vert \mathcal A\vert\leq \vert \mathcal C\vert+t$.

Let
\begin{align}
\mathcal D  =\{ \mathbf u\in P(n,l)\ :\ \mathbf u(i)= w_i\ \textnormal{for all $i\in [t]$ and}\ \mathbf u(i)= y_i\ \textnormal{for some $t+1\leq i\leq l$}\}.\notag
\end{align}
By equation (\ref{eq2}) and Corollary \ref{cor_intersect_l},
\begin{align}
\vert \mathcal D\vert &\leq \sum_{0\leq d\leq l-t-1}\binom{n-\sum_{1\leq i\leq t} w_i-d+l-t-2}{l-t-2}\notag\\
&\leq \sum_{0\leq d\leq l-t-1}\binom{n-d+l-t-2}{l-t-2}.\notag
\end{align}
Since
\begin{equation}
\mathcal C=\bigcup_{t+1\leq s\leq l} \mathcal A(1,2,\dots, t,s;w_{1},w_{2},\dots, w_{t},y_s)\subseteq \mathcal D,\notag
\end{equation}

\begin{equation}
\vert \mathcal C\vert\leq  \sum_{0\leq d\leq l-t-1}\binom{n-d+l-t-2}{l-t-2},\notag
\end{equation}
and
\begin{align}
\vert \mathcal A\vert &\leq \sum_{0\leq d\leq l-t-1}\binom{n-d+l-t-2}{l-t-2}+t.\notag
\end{align}
Now,
\begin{equation}
{n+l-t-1 \choose l-t-1}-{n-1 \choose l-t-1}=\sum_{0\leq d\leq l-t-1}\binom{n-d+l-t-2}{l-t-2}.\notag
\end{equation}
Therefore $\vert \mathcal A\vert\leq {n+l-t-1 \choose l-t-1}-{n-1 \choose l-t-1}+t$. Furthermore, equality holds if and only if $w_i=0$ for all $i\in [t]$ and $y_i=0$ for all $i\in [l]\setminus [t]$. By the maximality of $\mathcal A$,
\begin{equation}
\mathcal C=\bigcup_{s\in [l]\setminus [t]} \mathcal A_s,\notag
\end{equation}
and $\mathcal Q_j=\{ \mathbf  q_j\}$ for all $j\in [t]$. This completes the proof of the theorem.
\end{proof}

\end{document}